\newtheorem{thm}{Theorem}[section]
\newtheorem{cor}[thm]{Corollary}
\newtheorem{lem}[thm]{Lemma}
\theoremstyle{definition}
\theoremstyle{remark}
\newtheorem*{rem}{Remark}
\numberwithin{equation}{section}
\newcommand{\refb}{\operatorname{ref,bulk}}
\newcommand{\refbo}{\operatorname{ref,boundary}}
\newcommand{\Qeff}{Q_{\mathrm{eff}}}
\newcommand{\Prob}{{\mathbb{P}}}
\newcommand{\calW}{{\mathscr W}}
\newcommand{\Int}{\operatorname{Int}}
\newcommand{\erfc}{\operatorname{erfc}}
\newcommand{\R}{{\mathbb R}}
\newcommand{\C}{{\mathbb C}}
\newcommand{\E}{{\mathbb E}}
\newcommand{\eps}{{\varepsilon}}
\newcommand{\re}{\operatorname{Re}}
\newcommand{\im}{\operatorname{Im}}
\renewcommand{\d}{{\partial}}
\newcommand{\dbar}{\bar{\partial}}
\newcommand{\1}{\mathbf{1}}
\newcommand{\bigO}{\mathcal{O}}
\newcommand{\dist}{\operatorname{dist}}
\newcommand{\supp}{\operatorname{supp}}
\renewcommand{\P}{\Prob}
\newcommand{\ET}[1]{{\color{blue}{#1}}}
\newcommand*\bigcdot{\mathpalette\bigcdot@{.5}}
\newcommand*\bigcdot@[2]{\mathbin{\vcenter{\hbox{\scalebox{#2}{$\m@th#1\bullet$}}}}}
\begin{document}

\title[One-point densities of Hele-Shaw $\beta$-ensembles]{Remarks on the one-point density of Hele-Shaw $\beta$-ensembles}

\author{Yacin Ameur}
\address{Yacin Ameur\\
Department of Mathematics\\
Lund University\\
22100 Lund, Sweden}
\email{ Yacin.Ameur@math.lu.se}

\author{Erik Troedsson}
\address{Erik Troedsson\\
Department of Mathematics\\
Lund University\\
22100 Lund, Sweden}
\email{Erik.Troedsson@math.lu.se}

\begin{abstract} In this note we prove equicontinuity for the family of one-point densities with respect to a two-dimensional Coulomb gas at an inverse temperature $\beta\ge 1/2$ confined by an external potential
of Hele-Shaw (or quasi-harmonic) type. As a consequence, subsequential limiting Lipschitz continuous densities are defined on the microscopic scale. There are several additional results, for example comparing the one-point density with the thermal equilibrium density.
\end{abstract}

\subjclass[2010]{82B21; 60K35}

\keywords{$\beta$-ensemble; Hele-Shaw potential; one-point density; equicontinuity; Ward's identity; thermal equilibrium measure}

\maketitle

\section{Introduction and main results} The one-point function (or better: one-point measure) is a fundamental construct in Coulomb gas theory. Indeed, it is commonly believed that this single object
should in some sense contain all essential information about the gas.

In this note we consider a Coulomb gas ensemble $\{z_j\}_1^n$ in the complex plane $\C$, at an inverse temperature $\beta>0$, confined by an external potential of  the form
\begin{equation}\label{hsdef}Q(z)=\begin{cases}\Delta \cdot |z|^2+h(z),& z\in\Sigma\cr
+\infty,& \text{otherwise}\end{cases},
\end{equation} where $\Sigma$ is a closed subset of $\C$ (of positive capacity), $h(z)$ is some function harmonic in a neighbourhood of $\Sigma$ and $\Delta>0$ is a constant.

It is convenient to take $\Sigma$ to be compact, so that the normalized area
$|\Sigma|:=\int_\Sigma\, dA$
is finite, where we write $dA(z)=\frac 1 \pi dxdy$, $(z=x+iy)$ for the normalized Lebesgue measure on $\C$.

By Frostman's theorem (see \cite{ST}) there is a unique equilibrium measure $\sigma=\sigma[Q]$, which minimizes the weighted energy
\begin{equation}\label{wen}I_Q[\mu]=\int_{\C^2}\log\frac 1 {|z-w|}\, d\mu(z)\, d\mu(w)+\mu(Q)\end{equation}
over all compactly supported Borel probability measures $\mu$. (Here and throughout, we write $\mu(f)$ for $\int f\, d\mu$.)

Intuitively, the measure $\sigma$ represents the way a unit blob of charge will organize itself if
exposed to the external potential $Q$ for a long time.

The support
$$S=S[Q]:=\supp\sigma$$
is called the droplet in potential $Q$.
In what follows, we will assume that $S$ is contained in the interior of $\Sigma$.

As is well known \cite{ST}, the equilibrium measure is then absolutely continuous and has the structure
$$d\sigma(z)=\Delta\cdot \1_S(z)\, dA(z).$$

Moreover, by virtue of Sakai's theorem (cf. \cite{Sa,LM} and the references there) the boundary $\d S$ consists of finitely many Jordan curves, which are analytic except possibly for finitely many singular points which are either contact points or certain types of conformal cusps.

Potentials of the above type \eqref{hsdef} are sometimes
termed ``Hele-shaw potentials'' or ``quasi-harmonic potentials'', the model case being the quadratic (or ``harmonic'') potential $Q=|z|^2$. The truncation that $Q=+\infty$ near infinity is convenient and fairly standard
since \cite{EF}; the precise details of the hard wall outside $\Sigma$ is largely irrelevant for the statistics as long as $S$ is contained in
the interior of $\Sigma$.

Now fix a large integer $n$ and a parameter $\beta>0$ and consider the Boltzmann-Gibbs measure on $\C^n$
\begin{equation}\label{bgm}d\Prob_n^\beta(z_1,\ldots,z_n)=\frac 1 {Z_n^\beta}e^{-\beta H_n(z_1,\ldots,z_n)}\, dA_n(z_1,\ldots,z_n),\end{equation}
where $Z_n^\beta$ is the normalizing constant and $H_n$ is the Hamiltonian,
\begin{equation}\label{hamn}H_n=\sum_{i\ne j}\log \frac 1 {|z_i-z_j|}+n\sum_{i=1}^n Q(z_i).\end{equation}
Also we write $dA_n$ for the product measure $(dA)^{\otimes n}$.

The Coulomb gas (or $\beta$-ensemble) $\{z_j\}_1^n$ in external potential $Q$ is a configuration picked randomly with respect to \eqref{bgm}.\footnote{Our normalization is chosen so that $\beta=1$ corresponds to the determinantal case. A more common convention replaces our ``$\beta$'' by ``$\frac \beta 2$'', and then $\beta=2$ means the determinantal case.}

The 1-point function of \eqref{bgm} (in background measure $dA$), denoted $R_n^\beta(p)$, is
$$R_n^\beta(p)=\lim_{\eps\to 0}\frac 1 {\eps^2}\E_n^\beta[N(p,\eps)],$$
where $N(p,\eps)$ denotes the number of particles in the configuration $\{z_j\}_1^n$ which fall in the disk $D(p,\eps)$ with center $p$ and radius $\eps$.

Let $\check{Q}(z)$ (the obstacle function) be the pointwise supremum of $s(z)$ where $s$ ranges through the class of subharmonic functions on $\C$ such that $s(w)\le Q(w)$ for all $w\in\C$ and $s(w)\le 2\log|w|+\bigO(1)$ as $w\to\infty$.

The coincidence set $S^*:=\{Q=\check{Q}\}$ is then a compact set containing $S$ and possibly some ``shallow points'', such that $\sigma(S^*\setminus S)=0$. 
We shall here always assume that $S^*=S$.

We will write
\begin{equation}\label{qeff}
	\Qeff:=Q-\check{Q}
\end{equation}
for the ``effective potential''.

It is convenient to fix two real parameters $\Delta_0>0$ and $\eta_0>0$ (both ``small'') and assume that
\begin{equation}\label{lower_bound}\Delta\ge \Delta_0,\qquad \text{and}\qquad \dist(S,\C\setminus \Sigma)\ge 2 \eta_0.\end{equation}

We have the following theorem, which combines and improves on the bounds in \cite[Theorem 2]{Th} and \cite[Theorem 1]{A} for the given class of potentials.

\begin{thm}[Upper bound]\label{loca}
There is a constant $C=C(\Delta_0,\eta_0)$ such that for all $z\in\C$,
\begin{equation}\label{rbound}R_n^\beta(z)\le C^\beta (1+\beta^{-2})n\Delta \min\{1,ne^{-n\beta \Qeff(z)}\}.\end{equation}
\end{thm}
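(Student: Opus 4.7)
The plan is to combine two complementary estimates: a \emph{bulk saturation bound} $R_n^\beta(z) \le C^\beta(1+\beta^{-2})n\Delta$ valid uniformly in $z$, and an \emph{exponential off-droplet bound} $R_n^\beta(z) \le C^\beta(1+\beta^{-2})n^2\Delta\, e^{-n\beta\Qeff(z)}$. Their minimum is precisely the asserted inequality. The common starting point is the standard conditional representation
\[
R_n^\beta(z) \;=\; \frac{n\, e^{-n\beta Q(z)}}{Z_n^\beta}\int_{\C^{n-1}}\prod_{j=2}^n|z-z_j|^{2\beta}\,e^{-\beta\widetilde H_{n-1}(z_2,\ldots,z_n)}\,dA_{n-1},
\]
with $\widetilde H_{n-1}$ the reduced Hamiltonian for $n-1$ particles in the external weight $nQ$. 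Writing $A(z)$ for the integral, the key structural observation is that $z\mapsto A(z)$ is \emph{log-subharmonic}, being a positive integral of the log-subharmonic integrands $z\mapsto\prod_j|z-z_j|^{2\beta}$.

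For the bulk bound I apply a sub-mean value inequality to $A$ over a disk $D(z,r)$ and reinsert the Gibbs factors to obtain
\[
R_n^\beta(z) \;\le\; \frac{1}{\pi r^2}\int_{D(z,r)} R_n^\beta(w)\, e^{n\beta(Q(w)-Q(z))}\, dA(w).
\]
The Hele-Shaw structure $Q=\Delta|z|^2+h$ with $h$ harmonic on $\Sigma$ lets me absorb the factor $e^{n\beta(h(w)-h(z))}$ into an analytic gauge (by applying the sub-mean step to $A\cdot|F|^2$ for a suitable analytic $F$), leaving only the contribution $e^{n\beta\Delta|w-z|^2 + O(r)}$. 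Choosing $r^2\asymp(n\beta\Delta)^{-1}$ makes this exponential $O(1)$, and bounding $\int_{D(z,r)}R_n^\beta\,dA$ by $r^2\cdot n\Delta$ (via a self-consistent bootstrap from a crude preliminary bound together with the global normalization $\int R_n^\beta\,dA=n$) yields $R_n^\beta(z)\le C^\beta n\Delta$. The refinement to $(1+\beta^{-2})$ comes from a second-order sub-mean step needed to cancel a first-derivative error that is non-negligible when $\beta$ is small.

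For the exponential bound I use Frostman's identity $\check Q(z) = 2\int\log|z-w|\,d\sigma(w)+\gamma$ together with a concentration-of-energy estimate for configurations under $\Prob_n^\beta$. This yields a majorization
\[
A(z) \;\le\; C^\beta\, e^{n\beta\check Q(z) + O(n)}\cdot Z_{n-1}^{*},
\]
with $Z_{n-1}^{*}$ the matching reduced partition function. Combining with the standard lower bound $\log Z_n^\beta \ge -\beta n^2 I_Q[\sigma] + O(n)$ and invoking the bulk bound at a deep-bulk reference point to absorb the $O(n)$ errors yields the exponential estimate with the factor $n^2\Delta$.

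The main obstacle is the \emph{sharp} $\beta$-dependence of the prefactor. Both the log-subharmonic sub-mean and the concentration step naturally introduce $\beta$-loss terms, and a naive implementation produces a prefactor substantially worse than $C^\beta(1+\beta^{-2})$. Achieving the optimal $\beta^{-2}$ correction requires carefully matching the scale $r$ in the sub-mean step to the entropy contribution's noise level and tracking second-order Taylor terms of $Q$. A secondary technical issue is possible singular boundary behaviour of $\partial S$ (contact points, conformal cusps), which is kept under control by the assumption $\dist(S,\C\setminus\Sigma)\ge 2\eta_0$.
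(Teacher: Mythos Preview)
Your overall strategy—combining a uniform bulk bound with an exponential off-droplet bound—matches the paper's, and your sub-mean-value step for the bulk is also correct and parallels the paper's Section~3: it yields $R_n^\beta(z)\le C^\beta n\Delta\,\E_n^\beta[N(z,1/\sqrt{n\Delta})]$. The genuine gap is the next step, where you claim to bound $\int_{D(z,r)} R_n^\beta\,dA$ (equivalently $\E_n^\beta[N(z,r)]$) by $r^2\cdot n\Delta$ via ``a self-consistent bootstrap from a crude preliminary bound together with the global normalization $\int R_n^\beta\,dA=n$.'' This does not work: the global normalization gives no control on local clustering, and iterating the sub-mean inequality is vacuous (if $R_n^\beta\le M$ uniformly, sub-mean gives back $R_n^\beta\le C^\beta M$). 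This is precisely the hard step, and the paper devotes all of Sections~2--3 to it, adapting Thoma's multiscale overcrowding argument: one convolves the Gibbs density by radial probability measures supported on annuli at a geometric sequence of scales $r_j=\lambda^j/\sqrt{n\Delta}$, and exploits the gain from the logarithmic interaction to show $\Prob_n^\beta(\{N(p,1/\sqrt{n\Delta})\ge M\})\lesssim e^{-c\beta M^2}$ for $M\gtrsim 1+\beta^{-1}$. Summing then gives $\E_n^\beta[N(p,1/\sqrt{n\Delta})]\lesssim C^\beta(1+\beta^{-2})$, and this is where the $(1+\beta^{-2})$ factor actually originates—not from any ``second-order sub-mean step'' as you suggest.

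Your exponential off-droplet bound is also too vague to constitute a proof, and the paper's route is quite different and much cleaner. It uses the random weighted Lagrange function $\ell_j\in\calW_n$ together with the \emph{exact} identity $\E_n^\beta[|\ell_j(z)|^{2\beta}]=\tfrac{|\Sigma|}{n}R_n^\beta(z)$ (Lemma~\ref{lead}), the maximum principle $|\ell_j(z)|\le(\sup_S|\ell_j|)\,e^{-n\Qeff(z)/2}$, and a sub-mean bound on $\sup_S|\ell_j|^{2\beta}$ in terms of $\int_\C|\ell_j|^{2\beta}\,dA$. This immediately yields $R_n^\beta(z)\le e^\beta n^2\Delta\,e^{-n\beta\Qeff(z)}$ with no partition-function asymptotics and no concentration estimates. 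Your proposed route via Frostman's identity plus energy concentration would, even if carried out carefully, typically incur an $e^{O(n)}$ loss in the prefactor from matching $Z_n^\beta$ against $Z_{n-1}^*$, and this cannot be ``absorbed'' by evaluating the bulk bound at a reference point.
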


In case the boundary $\d S$ is everywhere regular (i.e. real-analytic) we can obtain a more explicit estimate in \eqref{rbound}. Indeed, a Taylor expansion as in the proof of \cite[Lemma 2.1]{A} gives
$$\Qeff(z)=(2\Delta)\delta_S(z)^2+\bigO(\delta_S(z)^3),\qquad (\delta_S(z):=\dist(z,S)\to 0).$$
The behaviour of $\Qeff$ close to a cusp is a bit more involved, see \cite[Section 2.2]{AKMW} for relevant asymptotics.

\begin{rem} (i) While \eqref{rbound} suffices for our present purposes, we believe that the following, slightly better bound should be true:
\begin{equation}\label{murkla}R^\beta_n(z)\le C^\beta (1+\beta^{-2})n\Delta e^{-n\beta \Qeff(z)}.\end{equation}
The bound \eqref{murkla} is well-known when $\beta=1$, as it follows from the identity $$R_n^{(\beta=1)}(z)=
 \sup\{|f(z)|^2\,;\, f\in\calW_n,\,\|f\|\le 1\}$$ where $\calW_n$ is the space of weighted polynomials introduced in Section \ref{local}.
An application of Lemma \ref{silem} implies \eqref{murkla} for $\beta=1$ (with a much better constant).

(ii) To prove \eqref{rbound}, we will first prove the uniform estimate $R_n^\beta(z)\le C^\beta (1+\beta^{-2})n\Delta$ for $z$ in some neighbourhood of the droplet. In this case, our analysis
is based on the multiscale approach 
by E.~Thoma in \cite{Th}. We remark that the obtained constant $C=C(\Delta_0,\eta_0)$ is very large. We shall then prove the global estimate
$R_n^\beta(z)\le e^\beta n^2\Delta  e^{-n\beta \Qeff(z)}$ by using ideas from \cite{A}. In particular the estimate \eqref{rbound} is improved outside of a vicinity of the droplet.
\end{rem}

We also have the following main result.

\begin{thm}[Equicontinuity]\label{mth}
Under the assumptions of Theorem \ref{loca} we have for all $z,w$ in the interior of $\Sigma$ with distance to the boundary at least $\eta_0$, and all $\beta\ge 1/2$, that
$$|R_n^\beta(z)-R_n^\beta(w)|\le C^\beta n\Delta \sqrt{n\Delta}|z-w|.$$
\end{thm}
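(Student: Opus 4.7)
The plan is to reduce Theorem \ref{mth} to a pointwise gradient estimate
\begin{equation*}
|\nabla R_n^\beta(p)| \le C^\beta n\Delta\sqrt{n\Delta},
\end{equation*}
valid for $p$ at distance at least $\eta_0/2$ from $\partial \Sigma$; the Lipschitz bound follows by integration along the segment from $w$ to $z$, which stays in this slab when $|z-w|\le \eta_0/2$, while the range $|z-w|\ge \eta_0/2$ is handled directly from the sup-bound of Theorem \ref{loca}.

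Differentiating the integral representation $R_n^\beta(p) = n (Z_n^\beta)^{-1}\int e^{-\beta H_n(p, z_2, \ldots, z_n)}\, dA_{n-1}$ with respect to $p$ yields
\begin{equation*}
\partial_p R_n^\beta(p) = -\beta R_n^\beta(p)\cdot \E^{[p]}[\partial_p H_n], \qquad \partial_p H_n = n\partial Q(p) - \sum_{j\ge 2}\frac{1}{p-z_j},
\end{equation*}
where $\E^{[p]}$ denotes the Gibbs expectation conditioned on $z_1=p$. Combined with the bound $R_n^\beta\le C^\beta n\Delta$ of Theorem \ref{loca}, the matter reduces to the pointwise fluctuation estimate $|\E^{[p]}[\partial_p H_n]| \le C^\beta \sqrt{n\Delta}$. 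The correct scale is suggested by the global $L^2$-identity
\begin{equation*}
\E[|\partial_{z_1} H_n|^2] = \beta^{-1}\E[\bar\partial_{z_1}\partial_{z_1}H_n] = n\Delta/\beta,
\end{equation*}
obtained by integration by parts in $z_1$: the distributional contributions from $\bar\partial(1/z)=\pi\delta_0$ vanish by pair repulsion (the two-point function vanishes on the diagonal), while the Hele-Shaw property $\partial\bar\partial Q\equiv \Delta$ accounts for the remaining term.

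To upgrade this global identity to the required pointwise bound, I would apply Ward's identity with test functions of the form $\phi(z)=\eta_{p,r}(z)/(z-p)$, where $\eta_{p,r}$ is a smooth cutoff at microscopic scale $r=c/\sqrt{n\Delta}$ around $p$. This rewrites the conditional Cauchy sum $\E^{[p]}[\sum (p-z_j)^{-1}]$ as the background electric field $n\partial Q(p)$ plus an error supported on the annulus where $\partial \eta_{p,r}\ne 0$, together with an integral against the two-point correlation $R_n^{(2)}$ that is controllable pointwise via Theorem \ref{loca}. The main technical obstacle will be the annular boundary contribution from $\partial\eta_{p,r}$, where clustering is most delicate; since the annulus has area $\sim 1/(n\Delta)$ and carries $O(1)$ particles on average, this term should contribute $O(1/r)=O(\sqrt{n\Delta})$, matching the desired scale, provided one secures a near-diagonal factorization bound $R_n^{(2)}(p,w)\lesssim R_n^\beta(p)R_n^\beta(w)$ as a consequence of Theorem \ref{loca}.
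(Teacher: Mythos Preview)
Your approach is genuinely different from the paper's, and in its present form it has a real gap.

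The decisive step in your outline is the near-diagonal bound $R_{n,2}^\beta(p,w)\lesssim R_n^\beta(p)\,R_n^\beta(w)$, which you say follows from Theorem \ref{loca}. It does not: Theorem \ref{loca} bounds only the one-point function. For $\beta=1$ the inequality is automatic from determinantal positivity, but for general $\beta$ no such statement is available from anything proved in the paper. The obvious attempt---regard the gas conditioned on $z_1=p$ as an $(n{-}1)$-point gas in the perturbed potential $Q(z)+\tfrac{2}{n}\log\tfrac{1}{|z-p|}$ and reapply Theorem \ref{1pb}---runs into the fact that this potential is no longer of Hele-Shaw type; the logarithmic singularity sits precisely on the microscopic annulus you need to control. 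Without that two-point input, the heuristic ``the annulus carries $O(1)$ particles'' is exactly the statement to be proved, and the Ward computation you sketch has an uncontrolled term. Your argument also does not indicate where the hypothesis $\beta\ge \tfrac12$ would enter, whereas in the paper it is essential.

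For comparison, the paper avoids two-point information entirely. It uses the exact identity $\E_n^\beta[|\ell_j(z)|^{2\beta}]=\tfrac{|\Sigma|}{n}R_n^\beta(z)$ for the random weighted Lagrange polynomial $\ell_j\in\calW_n$ (Lemma \ref{lead}), writes $|\ell_j(w)|^{2\beta}-|\ell_j(z)|^{2\beta}$ as a line integral of $2\beta|\ell_j|^{2\beta-1}\nabla|\ell_j|$, and then applies a purely deterministic Bernstein inequality $|\nabla|f|(p)|\le C\sqrt{n\Delta}\,A_{n\Delta}(|f|,p)$ for $f\in\calW_n$ (Lemma \ref{lastbil}). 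The expectation of $|\ell_j|^{2\beta-1}A_{n\Delta}(|\ell_j|,\cdot)$ is split by H\"older's inequality---this is precisely where $2\beta\ge 1$ is used---and each factor is controlled by Jensen, the identity \eqref{inff}, and Theorem \ref{loca}. Thus the equicontinuity is reduced to one-point bounds plus complex-analytic properties of weighted polynomials, and no control on correlations is needed.
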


We turn to consequences of our above results.

Fix a point $p$ in the interior of $\Sigma$ and consider the rescaled process $\{w_i\}_1^n$ where $$w_i=\sqrt{n\Delta}\cdot (z_i-p),$$
and where $\{z_i\}_1^n$ (as always) denotes a random sample from \eqref{bgm}.

 The one-point function of $\{w_i\}_1^n$ is
\begin{equation}\label{rescp}\rho_n^\beta(z)=\rho_n^{\beta,p}(z):=\frac 1 {n\Delta}R_n^\beta(p+\frac z {\sqrt{n\Delta}}).\end{equation}
By Theorem \ref{mth} we have the uniform Lipschitz continuity
$$|\rho_n^\beta(z)-\rho_n^\beta(w)|\le C^\beta|z-w|,$$
where $z,w$ belongs to an arbitrary but fixed compact subset of $\C$ and $n\ge n_0(p)$.

As a consequence, the family $\{\rho_n^\beta\}$ is uniformly equicontinuous on compact subsets of $\C$. The family is also uniformly bounded by Theorem \ref{loca}. In view of the Arzel\`{a}-Ascoli theorem, we obtain the following corollary.

\begin{cor} \label{bracc} If $\beta\ge 1/2$, then each subsequence of $\{\rho_n^\beta\}$ contains a further subsequence converging uniformly on compact sets to a Lipschitz continuous limiting one-point function $\rho^\beta=\rho^{\beta,p}$ satisfying
$$|\rho^\beta(z)-\rho^\beta(w)|\le C^\beta|z-w|.$$
\end{cor}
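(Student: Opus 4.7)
The plan is to deduce the corollary via the Arzel\`a--Ascoli theorem applied compact-by-compact, followed by a diagonal extraction. The remarks immediately preceding the statement already outline the two needed ingredients: uniform boundedness (furnished by Theorem \ref{loca}) and uniform Lipschitz continuity (furnished by Theorem \ref{mth}), both holding on an arbitrary fixed compact set $K\subset\C$ once $n$ is large enough in terms of $p$ and $K$.

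For the uniform bound, I fix a compact $K\subset\C$ and note that since $p$ lies in the interior of $\Sigma$ we have $\dist(p,\C\setminus\Sigma)>\eta_0$. For every $n\ge n_0(p,K)$ with $\sqrt{n\Delta}>\diam(K)/(\dist(p,\C\setminus\Sigma)-\eta_0)$, the rescaled evaluation point $p+z/\sqrt{n\Delta}$ stays in the admissible region $\{\dist(\cdot,\C\setminus\Sigma)\ge\eta_0\}$ whenever $z\in K$. Applying Theorem \ref{loca} and dividing by $n\Delta$ as in \eqref{rescp} then gives $\rho_n^\beta(z)\le C^\beta(1+\beta^{-2})$ uniformly on $K$. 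For the Lipschitz estimate, with the same constraint on $n$ and the same $z,w\in K$, Theorem \ref{mth} yields
\[
|\rho_n^\beta(z)-\rho_n^\beta(w)|=\frac{1}{n\Delta}\bigl|R_n^\beta(p+\tfrac{z}{\sqrt{n\Delta}})-R_n^\beta(p+\tfrac{w}{\sqrt{n\Delta}})\bigr|\le\frac{C^\beta n\Delta\sqrt{n\Delta}}{n\Delta}\cdot\frac{|z-w|}{\sqrt{n\Delta}}=C^\beta|z-w|,
\]
where the hypothesis $\beta\ge 1/2$ is inherited from Theorem \ref{mth}. In particular the Lipschitz constant is independent of $n$.

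With these two bounds in hand, Arzel\`a--Ascoli produces, for each compact $K\subset\C$, a subsequence of $\{\rho_n^\beta\}$ converging uniformly on $K$ to a continuous limit. Taking an exhaustion $K_1\subset K_2\subset\cdots$ with $\bigcup_j K_j=\C$ and performing a standard diagonal extraction yields a single subsequence $\{\rho_{n_k}^\beta\}$ that converges uniformly on every compact subset of $\C$ to some function $\rho^\beta=\rho^{\beta,p}$. Passing to the pointwise limit in the Lipschitz inequality preserves the constant, giving $|\rho^\beta(z)-\rho^\beta(w)|\le C^\beta|z-w|$. There is no real obstacle to this argument: the heavy lifting is already carried out in Theorems \ref{loca} and \ref{mth}, and the corollary itself reduces to a soft compactness exercise. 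The only point requiring minor vigilance is the threshold $n\ge n_0(p,K)$ ensuring the rescaled arguments remain in the region where the two theorems apply, which is automatic for all $n$ sufficiently large.
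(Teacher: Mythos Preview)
Your proposal is correct and follows essentially the same approach as the paper: the paper does not give a separate proof of the corollary but deduces it directly from the uniform bound (Theorem~\ref{loca}) and the uniform Lipschitz estimate (Theorem~\ref{mth}) via Arzel\`a--Ascoli, exactly as you have written out in detail. Your explicit diagonal extraction and your care about the threshold $n\ge n_0(p,K)$ simply flesh out what the paper leaves implicit.
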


Note that it follows from Theorem \ref{loca} (or \cite[Theorem 1]{A}) that if we consider a sequence $p_n$ of zooming points such that
$\sqrt{n\log n}\,\delta_S(p_n)\to\infty$ as $n\to\infty,$
then the rescaled densities $\rho_n^{\beta,p_n}$ converge to zero uniformly on compact subsets of $\C$. In other words, the rescaled processes
$\{w_j\}_1^n$ converge on compact subsets, as $n\to\infty$, to the empty point-process.

\begin{rem} The uniqueness of limit $\rho^\beta$ remains an open question even in the bulk. See Section \ref{concrem} for a discussion.
\end{rem}

\begin{rem} Our choice of working with Hele-Shaw potentials is mainly for convenience, since it simplifies several arguments. However, with some extra effort, it is possible to extend our
results to situations where the Laplacian of $Q$ can vary, but is bounded above and below by positive constants
 in a neighbourhood of the droplet.
\end{rem}

\begin{rem} We stress that the precise choice of cut-off set $\Sigma$ in \eqref{hsdef} is irrelevant as long as it contains the droplet $S$ in its interior; all the interesting statistics takes place in the vicinity of the droplet, and a change in $\Sigma$ will have a negligible effect for any reasonable statistic associated with the Coulomb gas. \footnote{The case when $\d\Sigma$ intersects the droplet is known as ``hard edge'' and is not considered in the present work. In this case $\Sigma$ may have a drastic effect for the Coulomb gas, see \cite{ACCH} and references therein.}

In fact, our results remain valid (with minor modifications of the proofs) provided that $Q(z)$ obeys the growth condition
$Q(z)-2\log|z|\to+\infty$ as $z\to\infty$, which is just strong enough to ensure that a version of Frostman's theorem holds \cite{ST}. Unfortunately, this growth condition is quite restrictive for potentials with constant Laplacian; for example it fails for the cubic potential $Q=|z|^2+c\re (z^3)$. For such reasons it is convenient to impose the cut-off \eqref{hsdef}. However, when we speak, for example, of the $\beta$-Ginibre ensemble, there is no problem in picking $Q(z)=|z|^2$ globally. The same holds for the almost circular $\beta$-ensemble below.
\end{rem}

\subsection{The almost circular $\beta$-ensemble} 
The one-dimensional circular $\beta$-ensemble, C$\beta$E, is associated with much work and very precise results, depending among other things on the evaluation of Selberg integrals. (See eg \cite{Fo} or \cite{FW} and the references there as well as the recent work \cite{CN}; see also \cite{M} with respect to CUE.)

A scale of two-dimensional ensembles where the droplet is an $n$-dependent thin annulus which collapses to the unit circle at a suitable rate was introduced in the works \cite{AB,BS,FBKSZ}. We will now exploit the form of our above estimates, specifically their dependence on $n$ and $\Delta$, to draw conclusions about such
``almost circular'' situations. We shall obtain the existence of new types of subsequential scaling limits, which might be of some intrinsic interest.

By definition (cf. \cite[Section 1.6]{AB}), the almost circular induced $\beta$-Ginibre ensemble
corresponds to an $n$-dependent
(Hele-Shaw) potential of the form
$$Q(z)=\Delta_n\cdot |z|^2
+b_n\log \frac 1 {|z|},$$
where we fix a ``non-circularity parameter'' $s>0$ and define $$\Delta_n=\Delta_{n,s}:=\frac n {s^2}$$
and choose 
$b_n$ so that the function $g_n(r)=\Delta_n\cdot r^2+b_n\log \frac 1 r$ satisfies 
$g_n'(1)=2$; (compare Section 1.6 in \cite{AB}).

The condition $g_n'(1)=2$ ensures that the outer boundary of the droplet coincides with the unit circle, and a simple computation shows that
the droplet $S_n=S[Q_n]$ is the thin annulus $$S_n=\left\{z\,;\,\sqrt{1-\frac {s^2}n}\le |z|\le 1\right\}.$$
A random sample $\{z_j\}_1^n$ from such an ensemble with $\beta=1$ is depicted in Figure \ref{fig_inducedG}.

\begin{figure}[h]
	\begin{center}
		\includegraphics[width=0.6\textwidth]{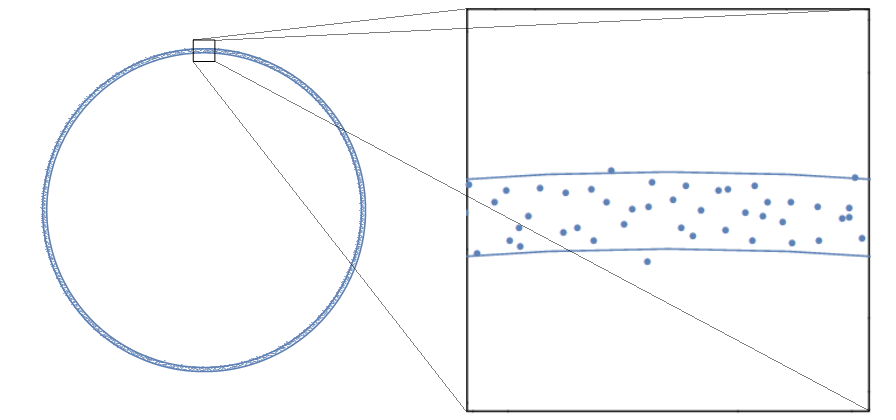}	
	\end{center}
	\caption{A sample from an almost circular Ginibre ensemble at $\beta=1$.}
	\label{fig_inducedG}
\end{figure}

It is natural to fix a point $p_n$ on the circle at the ``middle'' of the annulus:
$$p_n=\frac 1 2 (1+\sqrt{1-\frac {s^2}n})e^{i\alpha},$$
where $\alpha\in\R$ is an arbitrary but fixed angle.
We rescale about $p_n$ using the map
$$T_n(z):= -ie^{i\alpha}(n/s)(z-p_n).$$

We denote by $\{w_j\}_1^n$ the rescaled sample, i.e., we put $w_j=T_n(z_j)$. For large $n$, the system $\{w_j\}_1^n$ tends to occupy a horizontal strip about the real axis, see Figure \ref{fig_inducedG}.

By Corollary \ref{bracc}, the family of 1-point densities $\rho_n^\beta=\rho_n^{\beta,p_n}$ of $\{w_j\}_1^n$ is precompact in the appropriate sense, and we can extract Lipschitzian subsequential limits
$\rho^\beta=\lim \rho^\beta_{n_k}$.

Moreover, taking in consideration the rotational symmetry of the original ensemble, it is easy to see that each subsequential limit $\rho^\beta$ is \textit{horizontal translation invariant}, i.e.,
$$\rho^\beta(w+x)=\rho^\beta(w)$$
for all $w\in\C$ and $x\in\R$.

In \cite[Theorem 1.9]{AB} it is shown that $\rho^{(\beta=1)}(w)=F_s(2\im w)$ where
$$F_s(z)=\frac 1 {\sqrt{2\pi}}\int_{-s/2}^{\,s/2}e^{-\frac 1 2(z-t)^2}\, dt.$$
As far as we are aware, this one-point function was discovered in the 1990s in connection with almost-Hermitian random matrices from suitably scaled elliptic Ginibre ensembles, see \cite{FKS,Fo}, cf.~
\cite{ACV,AB} and the reference therein for more on this development. We here note that the short derivation of the one-point function in \cite[Theorem 1.9]{AB} uses Ward's equation \eqref{topp} and some simple apriori properties
of a subsequential limit $\rho^{(\beta=1)}$.

When $\beta=1$, the infinite point-process defined by the above 1-point function interpolates between the one-dimensional sine-process (as $s\to 0$) and the infinite Ginibre ensemble (as $s\to\infty$), see \cite{ACV,AB}. In the case of a general $\beta$, it seems natural
to ask to what extent a similar interpolation takes place.
(This question is not as straightforward as it might seem, since the uniqueness of an infinite $\beta$-Ginibre process is not clear when $\beta\ne 1$. See Section \ref{concrem}.)

\subsubsection*{Further results} In Section \ref{concrem} we give further remarks concerning (subsequential) microscopic densities and their properties. In particular we
recall the limit Ward equation from \cite{AKM} which is compared with the variational equation for the ``thermal equilibrium density'', which has been used recently, in some approaches to the Coulomb gas.

\subsubsection*{Notational remark} For $z=x+iy$ we write $\d=\frac 1 2 (\d_x+\frac 1 i \d_y)$ and $\dbar=\frac 1 2 (\d_x-\frac 1 i \d_y)$ for the usual complex derivatives with respect to $z$. Thus $\d\dbar=\frac 1 4(\d_{xx}+\d_{yy})$ is the normalized Laplacian (often denoted by $\Delta Q$); for the Hele-Shaw potential \eqref{hsdef} we have $\Delta =\d\dbar Q$ in the interior of $\Sigma$.

\subsubsection*{Plan of this paper} We begin in Section \ref{OCest} and \ref{Prelb} by reinvestigating the techniques of Thoma \cite{Th} transplanted to the present context. As we shall here by concerned with more precise quantitative bounds (such as $\Delta$-dependence of estimates), and since the notations and normalizations are quite different, we shall
 require a new analysis. 

In Section \ref{local} we recall some notions pertaining to weighted polynomials \cite{A} and prove the upper bound in Theorem \ref{loca}.

In Section \ref{locals} we prove the equicontinuity in Theorem \ref{mth}.

Finally, in Section \ref{concrem} we give some concluding remarks concerning Ward's equation and the relationship between the one-point function and the thermal equilibrium density.

\section{Overcrowding estimates} \label{OCest}

The goal of this section is to formulate and prove a suitable variant of Thoma's overcrowding estimate in \cite[Theorem 1]{Th}.

It is convenient to fix a small $\eta>0$ and put
\begin{equation}\label{sigman}\Sigma_\eta=\{z\in\Sigma\,;\,\dist(z,\d \Sigma)> \eta\}.\end{equation}
We assume that $\eta$ is small enough that the droplet $S$ is contained in $\Sigma_{2\eta}$.

It is intuitively helpful to recall that, by \cite[Theorem 2]{A}, a random sample $\{z_j\}_1^n$ from \eqref{bgm} is contained in $\Sigma_\eta$ with overwhelming probability if $n$ is large. (However, we shall not need this observation in what follows.)

We shall initially consider the Coulomb gas as a random point $(z_j)_1^n\in\C^n$, picked with respect to $\Prob_n^\beta$. (The more common perspective of random configurations $\{z_j\}_1^n$ is obtained by
introducing suitable combinatorial factors, as we will do later on.)

\subsection{Convolution operators} \label{convop} Let $\mu$ a radially symmetric probability measure on $\C$ and $I\subset \{1,\ldots,n\}$ a fixed index set.

Write $\C^I$ for the set of points $w^I=(w_j)_{j\in I}$ with $w_j\in \C$. We sometimes regard $\C^I$ as a subspace of $\C^n$ by
the obvious embedding, i.e., we put $w_j=0$ for all $j\not\in I$. We write $d\mu_I(w^I)=\prod_{j\in I}d\mu(w_j)$ for the product measure
on $\C^I$.

For suitable functions $F(z_1,\ldots,z_n)$
we form $T_{I,\mu}F$ by convolving with respect to $\mu$ in the
variables $z_i$ with $i\in I$, i.e.,
$$T_{I,\mu}F(z_1,\ldots,z_n):=\int_{\C^I}F(z+w^I)\,d\mu_I(w^I).$$

We extend $T_{I,\mu}$ to a bounded self-adjoint operator on $L^2(\C^n):=L^2(\C^n,dA_n)$.

For $z$  in a given subset $D$ of $\C^n$, we have the trivial bound
$$e^{-\beta  H_n(z)}\le e^{-\beta C}e^{-\beta  T_{I,\mu}H_n(z)}$$
where
$$C=\inf\{H_n(z)-T_{I,\mu}H_n(z)\,;\, z\in D\}.$$

In the case when $D=\supp F$, we write $C_{I,\mu,F}$ for the $C$ above.

Now, by Jensen's inequality we have,
$$e^{-\beta   T_{I,\mu} H_n(z)}\le T_{I,\mu} [e^{- \beta  H_n}](z).$$
We deduce that if $F\ge 0$ is an element of $L^2(\C^n)$, then
\begin{align*}\E_n^\beta &[F(z_1,\ldots,z_n)]=\frac 1 {Z_n^\beta}\int_{\supp F}Fe^{-\beta  H_n}\, dA_n\\
&\le e^{-\beta C_{I,\mu,F}}\frac 1 {Z_n^\beta}\int_{\supp F}F\cdot T_{I,\mu}[e^{-\beta H_n}]\, dA_n\\
&= e^{-\beta C_{I,\mu,F}}\frac 1 {Z_n^\beta}\int_{\C^n}[T_{I,\mu} F]e^{-\beta H_n}\, dA_n=e^{-\beta C_{I,\mu,F}}\E_n^\beta[T_{I,\mu} F].
\end{align*}

We have proven:

\begin{lem} \label{mc} If $F\in L^2(\C^n)$, $F\ge 0$, and if $\{z_j\}_1^n$ is a random sample with respect to \eqref{bgm} then
$$\E_n^\beta[F(z_1,\ldots,z_n)]\le e^{-\beta C_{I,\mu,F}}\E_n^\beta[(T_{I,\mu}F)(z_1,\ldots,z_n)].$$
\end{lem}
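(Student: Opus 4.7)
The plan is to follow the chain of inequalities essentially sketched in the paragraphs preceding the statement, assembling them into a clean proof. The starting point is the definition
\[
\E_n^\beta[F] = \frac{1}{Z_n^\beta}\int_{\supp F} F(z)\, e^{-\beta H_n(z)}\, dA_n(z),
\]
so the whole game is to replace the integrand $Fe^{-\beta H_n}$ by something controlled by $T_{I,\mu}F$.

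First I would invoke the definition of $C_{I,\mu,F}$ on $D=\supp F$ to obtain the pointwise bound
\[
e^{-\beta H_n(z)}\le e^{-\beta C_{I,\mu,F}}\, e^{-\beta\, T_{I,\mu}H_n(z)}
\qquad(z\in \supp F).
\]
Next, since $\mu$ is a probability measure and the exponential is convex, Jensen's inequality applied in each of the coordinates indexed by $I$ yields
\[
e^{-\beta\, T_{I,\mu}H_n(z)}\le T_{I,\mu}\!\bigl[e^{-\beta H_n}\bigr](z).
\]
Multiplying by $F(z)\ge 0$ and integrating over $\supp F$ therefore gives
\[
\int_{\supp F} F e^{-\beta H_n}\, dA_n
\le e^{-\beta C_{I,\mu,F}} \int_{\C^n} F\cdot T_{I,\mu}\!\bigl[e^{-\beta H_n}\bigr]\, dA_n,
\]
where the enlargement of the domain is legitimate because $F\ge 0$ and $T_{I,\mu}[e^{-\beta H_n}]\ge 0$.

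The final step is to move the operator across the pairing by self-adjointness of $T_{I,\mu}$ on $L^2(\C^n,dA_n)$. Radial symmetry of $\mu$ (so $\mu$ is invariant under $w\mapsto -w$) is exactly what makes $T_{I,\mu}$ self-adjoint, and this is the one place where the hypothesis on $\mu$ is used. Hence
\[
\int_{\C^n} F\cdot T_{I,\mu}\!\bigl[e^{-\beta H_n}\bigr]\, dA_n
=\int_{\C^n} (T_{I,\mu}F)\, e^{-\beta H_n}\, dA_n
= Z_n^\beta\, \E_n^\beta[T_{I,\mu}F],
\]
and dividing through by $Z_n^\beta$ gives the claimed inequality. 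The only mild technical point to be careful about is a Fubini-style justification that $T_{I,\mu}F\in L^2$ and that the pairing makes sense; this is routine because $T_{I,\mu}$ is a contraction on $L^2$ by Young's inequality, and everything in sight is nonnegative, so all manipulations are admissible. I do not expect a real obstacle — the content of the lemma is essentially the symbolic identity above together with the two ingredients (definition of $C_{I,\mu,F}$ and Jensen).
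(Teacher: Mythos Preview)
Your proof is correct and follows essentially the same route as the paper: use the definition of $C_{I,\mu,F}$ on $\supp F$, apply Jensen's inequality to pass from $e^{-\beta T_{I,\mu}H_n}$ to $T_{I,\mu}[e^{-\beta H_n}]$, and then invoke self-adjointness of $T_{I,\mu}$ to transfer the operator onto $F$. Your added remarks on why radial symmetry yields self-adjointness and on the routine integrability justifications are accurate and do not depart from the paper's argument.
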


\subsection{Superharmonicity estimate} In the following we will often use the following assumption for a given index set $I\subset\{1,\ldots,n\}$,
\begin{equation}\label{iass}z_i\in \Sigma_{\eta/2}\qquad \text{for all}\qquad i\in I.\end{equation}
Denote by $\# I$ the number of elements of $I$. Also suppose that the probability measure $\mu$ is supported in the small disc $D_{\eta/2}=\{z\in\C\,;\,|z|<\eta/2\}$.

We shall use the superharmonicity of the kernel $\log (1/|z|)$ to obtain bounds on the Hamiltonian $H_n$ defined in \eqref{hamn}. For the potential \eqref{hsdef} and $z\in \C^n$ satisfying \eqref{iass}, we have
\begin{align*}T_{I,\mu}&[H_n](z)\le \sum_{i\ne j}\log \frac 1 {|z_i-z_j|}+n\sum_{l\not\in I}Q(z_l)\\
&+n\sum_{k\in I}\int_{\C}(\Delta\cdot (|z_k|^2+2\re(z_k\bar{w})+|w|^2)+h(z_k+w))\, d\mu(w).\end{align*}

Also by \eqref{iass} and the fact that $h$ is harmonic on $\Sigma_{\eta/2}+D_{\eta/2}$, we have for all $k\in I$,
$$Q(z_k)=\int_\C (\Delta\cdot (|z_k|^2+2\re(z_k\bar{w}))+ h(z_k+w))\, d\mu(w).$$

We obtain the following result.

\begin{lem}\label{triv} With $Q_{0}(w):=\Delta\cdot |w|^2$, we have for all $z\in\C^n$ and $I\subset\{1,\ldots,n\}$ satisfying \eqref{iass},
that if $\supp\mu\subset \{|z|<\eta/2\}$ then
$$T_{I,\mu}[H_n](z)\le H_n(z)+n\mu(Q_{0})\cdot \# I.$$
\end{lem}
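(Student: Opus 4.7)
The plan is to combine, in a mostly algebraic way, the two displays given just before the statement. The first, from superharmonicity of $\log(1/|\cdot|)$, bounds $T_{I,\mu}[H_n](z)$ above by the \emph{unconvolved} pair-interaction plus a convolved external-potential term for each $k\in I$. The required sub-mean-value inequality $\int \log\frac{1}{|\zeta-v|}\,d\nu(v)\le \log\frac{1}{|\zeta|}$, valid for any radial probability measure $\nu$, I would apply with $\nu=\mu$ when a pair $(i,j)$ has exactly one index in $I$, and with $\nu$ equal to the push-forward of $\mu\otimes\mu$ under $(w_i,w_j)\mapsto w_i-w_j$ (still radial, since $\mu$ is radial) when both indices lie in $I$. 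That handles the pair-interaction part.

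Next I would use the mean-value identity for the harmonic part of $Q$: because $h$ is harmonic on $\Sigma_{\eta/2}+D_{\eta/2}$ and $z_k+D_{\eta/2}$ lies in this set by \eqref{iass}, we have $\int h(z_k+w)\,d\mu(w)=h(z_k)$; coupled with $\int 2\re(z_k\bar w)\,d\mu(w)=0$ (radiality of $\mu$), this yields
$$Q(z_k)=\int_\C (\Delta|z_k|^2+2\re(z_k\bar w)+h(z_k+w))\,d\mu(w),\qquad k\in I,$$
which is the second display of the excerpt. Subtracting this from the convolved integrand attached to each $k\in I$ in the first display leaves exactly $\int \Delta|w|^2\,d\mu(w)=\mu(Q_0)$ as the excess.

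Finally, summing the excess over $k\in I$ and adjoining the unchanged $l\notin I$ potential terms reassembles $n\sum_{i=1}^n Q(z_i)$, so the overall upper bound becomes $\sum_{i\ne j}\log\frac{1}{|z_i-z_j|}+n\sum_i Q(z_i)+n\mu(Q_0)\cdot\#I=H_n(z)+n\mu(Q_0)\cdot\#I$, which is the claim. The only real obstacle is a geometric bookkeeping check that \eqref{iass} together with $\supp\mu\subset D_{\eta/2}$ simultaneously guarantees (i) $z_k+w\in\Sigma$, so that formula \eqref{hsdef} is applicable to $Q(z_k+w)$ with the quadratic-plus-harmonic structure (in particular $Q<+\infty$ there), and (ii) $h$ is harmonic on the disc $D(z_k,\eta/2)$, so its $\mu$-average equals $h(z_k)$. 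Both follow immediately from the definition of $\Sigma_{\eta/2}$ and the standing assumption that $h$ is harmonic on a neighbourhood of $\Sigma$.
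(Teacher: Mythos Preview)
Your proposal is correct and follows essentially the same approach as the paper: use superharmonicity of $\log(1/|\cdot|)$ to bound the convolved pair-interaction by the unconvolved one, then use the mean-value property of $h$ (together with radiality of $\mu$ to kill the cross term $2\re(z_k\bar w)$) to identify the excess in the potential term as $n\mu(Q_0)$ for each $k\in I$. Your explicit case distinction between pairs with one versus two indices in $I$, and the use of the radial push-forward of $\mu\otimes\mu$ under $(w_i,w_j)\mapsto w_i-w_j$ in the latter case, is a helpful elaboration of a step the paper compresses into a single phrase, but the argument is the same.
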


\subsection{Improved bounds} In what follows we fix an arbitrary point $p\in\Sigma_\eta$.

We will use the following notation for closed discs centered at $p$ and annuli centered at the origin
$$D(p,r):=\{z\,;\, |z-p|\le r\},\qquad A(a,b):=\{z\,;\, a\le |z|\le b\}.$$

For $0<r<\frac 1 {10}R$ we let $\mu=\mu_{r,R}$ be the uniform probability measure on the annulus
$$A_{r,R}:=A(\frac R 2 ,R-2r).$$
We assume throughout that $R$ is small: $R<\eta/2$.

Thus $\mu$ has density $\frac 1 {(R-2r)^2-R^2/4}\1_{A_{r,R}}$ with respect to $dA$, and so (with $Q_0=\Delta|w|^2$)
\begin{align*}\mu(Q_0)&=2\Delta \int_{R/2}^{R-2r} \frac {s^3}{(R-2r)^2-R^2/4}\, ds=\frac {\Delta} 2 ((R-2r)^2+(R/2)^2)<\Delta R^2.\end{align*}

We consider $z=(z_j)_1^n$ as an ordered random sample from $\P_n^\beta$ and we denote by $I(p,r,z)$ the random set $I$ of indices $i$ such that $z_i$ falls in the disk $D(p,r)$.

Next define a random variable $X_{n,p}$ by
$$X_{n,p}(z):=T_{I(p,r,z),\mu_{r,R}}[H_n](z).$$

\begin{lem} \label{l1} Fix $p\in\Sigma_{\eta}$ and denote by $$N=N_n(p,r):=\# I(p,r,z)$$ the number of indices $i\in\{1,\ldots,n\}$ such that $z_i\in D(p,r)$.
Then for all $z\in \C^n$,
$$X_{n,p}(z)\le H_n(z)+n\Delta R^2N-N(N-1)\log\frac R {4r}.$$
\end{lem}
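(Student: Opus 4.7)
Proof sketch. I would build on Lemma \ref{triv}, which treats the quasi-harmonic potential part exactly but bounds every averaged pair interaction by the crude superharmonic mean-value inequality. The lossy step is the treatment of pair terms with both indices $i, j \in I$: these correspond to particles already inside the small disk $D(p, r)$, yet under $T_{I,\mu}$ they are spread over the annulus $A(R/2, R-2r)$ whose inner radius $R/2$ is much larger than $r$, so the logarithmic repulsion is substantially reduced after averaging. Capturing this gain is the point of Lemma \ref{l1}.

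The plan is to apply $T_{I,\mu}$ term by term to $H_n = \sum_{i \ne j} \log(1/|z_i - z_j|) + n \sum_i Q(z_i)$. The potential part contributes exactly $n \mu(Q_0) N$ above $n \sum_i Q(z_i)$, as in Lemma \ref{triv}, and $n \mu(Q_0) N \le n \Delta R^2 N$ by the computation preceding the lemma. For the pair part, split ordered pairs $(i, j)$ with $i \ne j$ by the size of $\{i, j\} \cap I$. Pairs with $|\{i, j\} \cap I| \le 1$ contribute at most their original values by the super-mean-value inequality for $\log(1/|\cdot|)$ against the radial $\mu$ centered at $0$. The actual gain comes from the $N(N-1)$ ordered pairs with $\{i, j\} \subset I$.

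The key inequality is that, for each such pair, writing $b = z_i - z_j$ (so $|b| \le 2r$),
\[\int\!\!\int \log \frac{1}{|(z_i + w_i) - (z_j + w_j)|} \, d\mu(w_i)\, d\mu(w_j) \le \log \frac{1}{|z_i - z_j|} - \log \frac{R}{4r}.\]
To prove it, integrate in $w_i$ first, and use the radial decomposition of $\mu$ together with the identity $(2\pi)^{-1} \int_0^{2\pi} \log|\rho e^{i\theta} - c| \, d\theta = \max(\log \rho, \log |c|)$: this identifies $\int \log|w_i - c| \, d\mu(w_i)$ with a radial subharmonic function $f(c)$ which, for $|c| \le R/2$, collapses to the constant
\[L_0 := \frac{2}{(R-2r)^2 - (R/2)^2} \int_{R/2}^{R-2r} s \log s \, ds \ge \log \frac{R}{2},\]
the lower bound being $\log s \ge \log(R/2)$ on the support of the outer integral. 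A second, outer application of the sub-mean-value inequality to the subharmonic $f$ against the radial $\mu$ gives $\int\!\!\int \log|(z_i - z_j) + (w_i - w_j)| \, d\mu(w_i)\, d\mu(w_j) \ge f(-b) = f(b) = L_0$, using $|b| \le 2r < R/2$. Negating and noting that $|b| \le 2r$ is precisely the condition $\log(2/R) \le \log(1/|b|) - \log(R/(4r))$ yields the displayed pair bound.

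Summing the per-pair gain over the $N(N-1)$ ordered in-in pairs and adding the potential contribution completes the argument. The crux is the bound $L_0 \ge \log(R/2)$ extracted from the annular geometry of the supporting measure, which is what makes the improvement over Lemma \ref{triv} possible; the remaining arithmetic is bookkeeping.
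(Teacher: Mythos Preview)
Your proof is correct and follows essentially the same route as the paper. Both arguments split the Hamiltonian, use Lemma~\ref{triv} (with $\mu(Q_0)\le \Delta R^2$) for the potential part and the super-mean-value inequality for mixed pairs, and then bound each averaged in--in pair interaction by $\log(2/R)$: the paper phrases this as the pointwise bound $U^\mu\le \log(2/R)$ for the logarithmic potential of the annular measure, while you obtain the equivalent $L_0\ge \log(R/2)$ via the circular-mean identity and a second sub-mean-value step; combining with $\log\frac{1}{|z_i-z_j|}\ge \log\frac{1}{2r}$ then yields the $\log\frac{R}{4r}$ gain in both cases.
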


\begin{proof} We first note that with $I=I(p,r,z)$ the assumption \eqref{iass} holds, since $p\in \Sigma_\eta$ and $r<R<\eta/2$.

The logarithmic potential
$$U^{\mu}(\zeta)=\int\log\frac 1 {|\zeta-w|}\, d\mu(w)$$
is radially symmetric and non-increasing in the outwards radial direction, and moreover it is constant on the disc $D(0,\frac R {2})$. It follows that
$$U^{\mu}(\zeta)\le U^\mu(0) \leq \sup_{w\in A_{r,R}} \log \frac {1}{w} = \log\frac{2}{R},$$
and hence
$$T_{I,\mu}[\sum_{i,j\in I,\, i\ne j}\log \frac 1 {|z_i-z_j|}]\le N(N-1)\log \frac {2} R.$$

Hence, since $\log\frac 1 {|z_i-z_j|}\ge \log \frac 1 {2r}$ when both $z_i$ and $z_j$ are in $D(p,r)$,
\begin{align*}T_{I,\mu}[\sum_{i\ne j}\log \frac 1 {|z_i-z_j|}]&\le \sum_{i\ne j}\log \frac 1 {|z_i-z_j|}-\sum_{i,j\in I\, i\ne j}\log \frac 1 {|z_i-z_j|}+N(N-1)\log \frac {2} R\\
&\le \sum_{i\ne j}\log \frac 1 {|z_i-z_j|}-N(N-1)\log \frac R {4r}.
\end{align*}
The result follows by combining with Lemma \ref{triv} and using the bound $\mu(Q_0)\le \Delta R^2$.
\end{proof}

Fix $p\in\Sigma_\eta$ and
let $I_1$ and $I_2$ be two fixed index sets with $I_1\subset I_2$.

We define a random variable $F_{I_1,I_2}$ by
$$F_{I_1,I_2}(z):=\1(\{I(p,r,z)=I_1\}\cap\{I(p,R,z)=I_2\}).$$

\begin{lem} \label{l2} Let $N_1=\# I_1$ be the number of elements of $I_1$. Then there exists a constant $C$ such that for all $z\in \C^n$,
$$T_{I_1,\mu_{r,R}}[F_{I_1,I_2}](z)\le e^{CN_1}\left(\frac r R\right)^{2N_1}\1(\{I(p,R,z)=I_2\}).$$
\end{lem}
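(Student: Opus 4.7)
The plan is to unpack the definition of the convolution operator $T_{I_1,\mu_{r,R}}$ and exploit the fact that shifting only the coordinates indexed by $I_1$ keeps the other coordinates frozen. First I would argue that the prefactor $\mathbf{1}(\{I(p,R,z)=I_2\})$ is just the ``support'' of the operator output: if $I(p,R,z)\neq I_2$, then the integrand $F_{I_1,I_2}(z+w^{I_1})$ vanishes for every admissible $w^{I_1}$. This requires checking three cases. If there exists $j\in I_2\setminus I_1$ with $z_j\notin D(p,R)$, or $j\notin I_2$ with $z_j\in D(p,R)$, then $w_j=0$ and the corresponding membership in $I(p,R,z+w^{I_1})$ already contradicts $I_2$. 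If there exists $j\in I_1$ with $z_j\notin D(p,R)$, then for any $w_j$ in the support of $\mu_{r,R}$ (so $|w_j|\le R-2r$), the triangle inequality gives $|z_j+w_j-p|\ge |z_j-p|-|w_j|>R-(R-2r)=2r>r$, so $z_j+w_j\notin D(p,r)$, contradicting $j\in I(p,r,z+w^{I_1})=I_1$.

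Next, assuming $I(p,R,z)=I_2$, I would bound the integrand by discarding all conditions other than the requirement that every shifted $I_1$-coordinate lands in $D(p,r)$:
\[
F_{I_1,I_2}(z+w^{I_1})\le \prod_{j\in I_1}\mathbf{1}\{z_j+w_j\in D(p,r)\}.
\]
Since $\mu_{I_1}$ is a product measure, the integral factorises:
\[
T_{I_1,\mu_{r,R}}[F_{I_1,I_2}](z)\le \prod_{j\in I_1}\mu_{r,R}\bigl(D(p-z_j,r)\bigr).
\]

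Finally I would bound each factor using the explicit density of $\mu_{r,R}$. The measure $\mu_{r,R}$ has density $\bigl((R-2r)^2-R^2/4\bigr)^{-1}\mathbf{1}_{A_{r,R}}$ with respect to $dA$, and a disc of radius $r$ has $dA$-measure $r^2$. Since $r<R/10$ by assumption, a direct computation yields $(R-2r)^2-R^2/4\ge c R^2$ for some absolute $c>0$, so
\[
\mu_{r,R}\bigl(D(p-z_j,r)\bigr)\le \frac{r^2}{cR^2}.
\]
Taking the product over $j\in I_1$ gives the claimed bound with $C=\log(1/c)$.

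There is no real obstacle here; the argument is book-keeping. The most delicate step is the case analysis in the first paragraph, where one must carefully use $I_1\subset I_2$ and the concrete annular support of $\mu_{r,R}$ to rule out every way in which the integrand could be nonzero when $I(p,R,z)\ne I_2$. Once that is in place, the remaining estimate is just the ratio of the area of a small disc to the area of the annulus.
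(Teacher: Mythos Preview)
Your proof is correct and follows essentially the same route as the paper's own proof: the same case analysis on indices to establish the support condition $\mathbf{1}(\{I(p,R,z)=I_2\})$, and the same density/area estimate to get the factor $e^{CN_1}(r/R)^{2N_1}$. The only cosmetic difference is that the paper packages the pointwise bound as a Young-type inequality $\|\rho\|_\infty^{N_1}\|F_{I_1,I_2}\|_{L^1(\C^{I_1})}$, whereas you factorize the product measure explicitly into $\prod_{j\in I_1}\mu_{r,R}(D(p-z_j,r))$; these are the same computation.
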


\begin{proof}

Let $\rho$ be the density of $\mu_{r,R}$ with respect to $dA$, i.e., $\rho=\frac 1 {(R-2r)^2-R^2/4}\1_{A_{r,R}}$. Clearly $\rho$ satisfies a bound of the form
$\|\rho\|_\infty\le e^CR^{-2}$ with a suitable constant $C$.
We also have the $L^1$-bound
$$\|F_{I_1,I_2}\|_{L^1(\C^{I_1})}\le \int_{D(p,r)\cap \C^{I_1}} \, dA_{I_1}=r^{2N_1}.$$

Hence by Young's inequality, we have the pointwise upper bound
\begin{align*}T_{I_1,\mu_{r,R}}[F_{I_1,I_2}]&\le \|\rho\|_\infty^{N_1}\|F_{I_1,I_2}\|_{L^1(\C^{I_1})}\le e^{CN_1}\left(\frac r R\right)^{2N_1}.
\end{align*}

Next note that if there is an index $i\in I_2\setminus I(p,R,z)$ then $z_i\not \in D(p,R)$. If $i\not \in I_1$ then $T_{I_1,\mu_{r,R}}$ does not integrate in the $i$:th coordinate, so
$T_{I_1,\mu_{r,R}}[F_{I_1,I_2}](z)=F_{I_1,I_2}(z)=0$. If $i\in I_1$ then $T_{I_1,\mu_{r,R}}$ does integrate in the $i$:th coordinate, but only over the disc $D(z_i,R-2r)$, which is disjoint from $D(p,r)$.
Thus $F_{I_1,I_2}(z+w)=0$ whenever $w_i\in\supp \mu_{r,R}$, and again $T_{I_1,\mu_{r,R}}[F_{I_1,I_2}](z)=0$.

Finally, if there is an index $i\in I(p,R,z)\setminus I_2$ then $i\in I(p,R,z)\setminus I_1$ and $T_{I_1,\mu_{r,R}}$ does not integrate in the
$i$:th coordinate. Hence again $T_{I_1,\mu_{r,R}}[F_{I_1,I_2}](z)=0$.
\end{proof}

\subsection{Overcrowding} Continuing in the spirit of \cite{Th}, we now prove the following lemma.

\begin{lem} \label{l3} Fix $p\in\Sigma_{\eta}$ and write $\lambda=R/r\ge 10$ where $0<r<\frac 1 {10}R$ and $R<\eta/2$. There exists a constant $C$ such that for each integer $M$ satisfying
$$M\ge C\frac {n\Delta \lambda^2r^2+1/\beta}{\log(\lambda/4)},$$
we have that, with $N(p,r)=\# I(p,r,z)$,
\begin{equation}\label{get}\Prob_n^\beta(\{  N(p,r)\ge M\})\le \Prob_n^\beta(\{N(p,R)\ge \lambda^2 M\})+ e^{C'(1+\beta n\Delta \lambda^2 r^2)M-\beta M(M-1)\log(\lambda/4)},\end{equation}
where $C',c>0$ are suitable real constants.
\end{lem}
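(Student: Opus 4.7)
The plan is to decompose
$$\{N(p,r)\ge M\}=\bigl(\{N(p,r)\ge M\}\cap\{N(p,R)\ge \lambda^2 M\}\bigr)\cup \bigl(\{N(p,r)\ge M\}\cap\{N(p,R)<\lambda^2 M\}\bigr),$$
absorb the first event into the first term on the right-hand side of \eqref{get}, and on the second event partition further according to the precise values $I_1=I(p,r,z)$ and $I_2=I(p,R,z)$, both of which are deterministic functions of the ordered sample. Writing $N_j=\#I_j$, we need only bound
$$\sum_{N_1\ge M}\sum_{N_1\le N_2<\lambda^2 M}\,\sum_{\substack{|I_1|=N_1,\,|I_2|=N_2\\ I_1\subset I_2}}\E_n^\beta\bigl[F_{I_1,I_2}\bigr]$$
by the displayed exponential term.

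For each fixed pair $I_1\subset I_2$, I would apply Lemma \ref{mc} with $F=F_{I_1,I_2}$ and convolution measure $\mu=\mu_{r,R}$ in the indices $I\!=\!I_1$. Since $F_{I_1,I_2}$ is supported on the event $\{I(p,r,z)=I_1\}$, on this support the random count $N=N(p,r)$ equals $N_1$; Lemma \ref{l1} then gives the lower bound
$$C_{I_1,\mu,F_{I_1,I_2}}\ \ge\ -n\Delta R^2 N_1+N_1(N_1-1)\log\tfrac{R}{4r}\ =\ -n\Delta\lambda^2r^2N_1+N_1(N_1-1)\log(\lambda/4).$$
Combining with Lemma \ref{l2}, which gives the pointwise inequality $T_{I_1,\mu}F_{I_1,I_2}\le e^{CN_1}\lambda^{-2N_1}\1(\{I(p,R,z)=I_2\})$, yields
$$\E_n^\beta[F_{I_1,I_2}]\ \le\ e^{\beta n\Delta\lambda^2 r^2 N_1-\beta N_1(N_1-1)\log(\lambda/4)+(C-2\log\lambda)N_1}\,\Prob_n^\beta(I(p,R,z)=I_2).$$

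Now comes the bookkeeping. Summing over $I_1\subset I_2$ of size $N_1$ produces a factor $\binom{N_2}{N_1}\le (e\lambda^2)^{N_1}$ (using $N_2<\lambda^2 M\le \lambda^2 N_1$), while the sum over $I_2$ of size $N_2$ collapses $\sum_{|I_2|=N_2}\Prob_n^\beta(I(p,R,z)=I_2)=\Prob_n^\beta(N(p,R)=N_2)$ by exchangeability. After summing over $N_2$ (which costs no more than $1$) one is left with
$$\Prob_n^\beta\bigl(N(p,r)=N_1,\,N(p,R)<\lambda^2 M\bigr)\ \le\ \exp\!\bigl(C_1(1+\beta n\Delta\lambda^2r^2)N_1-\beta N_1(N_1-1)\log(\lambda/4)\bigr)$$
for a suitable constant $C_1$. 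Finally, I would sum over $N_1\ge M$: the ratio of consecutive terms is $\exp(C_1(1+\beta n\Delta\lambda^2 r^2)-2\beta N_1\log(\lambda/4))$, which is $\le 1/2$ precisely when $\beta N_1\log(\lambda/4)\gtrsim \beta n\Delta\lambda^2 r^2+1$. This is exactly what the hypothesis on $M$ (after absorbing constants into $C$) delivers for $N_1\ge M$, so the series is geometric and dominated up to a factor of $2$ by its first term, giving \eqref{get} after adjusting $C'$.

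The main obstacle is the combinatorial sum: one must verify that all linear-in-$N_1$ contributions, including $\binom{N_2}{N_1}$ (which a priori could reach size $2^{\lambda^2 M}$ if handled carelessly), are dominated by the quadratic term $\beta N_1(N_1-1)\log(\lambda/4)$ once $N_1\ge M$. The key observation is that $N_2$ is itself bounded by $\lambda^2 N_1$ on the relevant event, so $\log\binom{N_2}{N_1}$ is only linear, not exponential, in $N_1$, after which the quadratic suppression takes over for $M$ in the stated range.
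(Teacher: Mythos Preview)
Your proposal is correct and follows essentially the same route as the paper: decompose according to whether $N(p,R)\ge\lambda^2 M$, on the complementary event stratify by the pair $(I_1,I_2)$, apply Lemma~\ref{mc} with input from Lemmas~\ref{l1} and~\ref{l2}, collapse the sum over $I_1\subset I_2$ via $\binom{N_2}{N_1}\le(e\lambda^2)^{N_1}$ (using $N_2<\lambda^2 M\le\lambda^2 N_1$), sum over $N_2$ at cost $\le 1$, and finish with the geometric-series tail. The paper phrases the binomial bound through Stirling's formula but arrives at the same linear-in-$N_1$ control, and its threshold for $M$ is derived exactly as you indicate.
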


\begin{proof} Let $z=(z_j)_1^n$ denote an ordered random sample and note that the random index set $I=I(p,R,z)$ satisfies the assumption \eqref{iass}.

Let $I_1$, $I_2$ be index sets, with $I_1\subset I_2$, of sizes $k$ and $l$ respectively.

Write $I_{1,2}$ for the event
$$I_{1,2}:=\{z\,;\, I(p,r,z)=I_1,\, I(p,R,z)=I_2\}.$$

By Lemma \ref{l1} we have for all $z\in I_{1,2}$ that
$$H_n(z)-T_{I(p,r,z),\mu_{r,R}}[H_n](z)\ge -n\Delta R^2 k+k(k-1)\log \frac \lambda 4.$$

Hence by Lemma \ref{mc},
\begin{align*}\E_n^\beta([F_{I_1,I_2}])\le e^{\beta(n\Delta R^2 k-k(k-1)\log \frac \lambda 4)}\E_n^\beta(T_{I_1,\mu_{R,r}}[F_{I_1,I_2}]).\end{align*}

In view of Lemma \ref{l2}, the right hand side is bounded by
$$e^{\beta(n\Delta R^2 k-k(k-1)\log \frac \lambda 4)}e^{Ck}\lambda^{-2k}\Prob_n^\beta(\{I(p,R,z)=I_2\}),$$
with $C>0$ being the constant from Lemma \ref{l2}. We have shown that
$$\Prob_n^\beta(I_{1,2})\le e^{C_1(1+n\Delta \beta R^2)k-\beta k(k-1)\log\frac \lambda 4}\lambda^{-2k}\Prob_n^\beta(\{I(p,R,z)=I_2\}).$$

It follows that if $J_{k,l}$ is the event $\{\# I(p,r,z)=k\}\cap \{\# I(p,R,z)=l\}$, then
$$\Prob_n^\beta(J_{k,l})\le e^{C_1(1+nK \beta R^2)k-\beta k(k-1)\log\frac \lambda 4}\lambda^{-2k}{l\choose k} \Prob_n^\beta(\{N(p,R)=l\}).$$

We next observe that if $l\ge k+1$ then, by Stirling's formula,
\begin{equation}\label{fel}{l\choose k}\le 2\sqrt{\frac l {k(l-k)}}e^{k\log\frac l k+(l-k)\log\frac l {l-k}}\le e^{Ck}\left(\frac l k\right)^k,\end{equation}
for a large enough constant $C$.

In the case when $l\le \lambda^2 k$, this gives, with a new constant $C_2$,
\begin{align*}\Prob_n^\beta(J_{k,l})&\le e^{C_2(1+n\Delta\beta R^2)k-\beta k(k-1)\log\frac \lambda 4} \Prob_n^\beta(\{\# I(p,R,z)=l\}).
\end{align*}

Hence
\begin{align*}\Prob_n^\beta &(\{N(p,r)\ge M\})-\Prob_n^\beta(\{N(p,R)\ge \lambda^2 M\})\\
&\le \Prob_n^\beta(\{M\le N(p,r)\}\cap \{N(p,R)\le \lambda^2 M\})\\
\quad &=\sum_{k=M}^{\lambda^2 M}\sum_{l=k}^{\lambda^2 M}\Prob_n^\beta(J_{kl})
\le \sum_{k=M}^{\infty}e^{C_2(1+n\Delta\beta R^2)k-\beta k(k-1)\log\frac \lambda 4}.
\end{align*}

We choose $M$ large enough that for $k\ge M$, the ratio of consecutive terms in the last sum is $\le 1/2$, i.e.,
$$C_2(1+n\Delta\beta R^2)-2\beta k\log \frac \lambda 4\le \log\frac 1 2,\qquad (k\ge M).$$
This is satisfied if
\begin{equation*}M\ge \frac {C_2(1+n\Delta\beta R^2)+\log 2}{2\beta \log \frac \lambda 4}.\end{equation*}

Under this condition we obtain \eqref{get}
with a suitable $C'\ge C_2$.
\end{proof}

We now come to the main overcrowding estimate of this section; it roughly corresponds to \cite[Theorem 1]{Th}.

\begin{thm}\label{oc} Suppose $p\in \Sigma_{2\eta}$, and that $\Delta\ge \Delta_0>0$ and $\eta\ge \eta_0>0$. There exists a constant $C=C(\Delta_0,\eta_0)$ independent of $p$ such that for
 any $\lambda\ge 10$, and any $M$ satisfying
\begin{equation}\label{mcond}M\ge C\frac {\lambda^2  +1/\beta}{\log\frac \lambda 4}\end{equation}
we have
$$\Prob_n^\beta(\{N(p,1/\sqrt{n\Delta})\ge M\})\le e^{-\beta (\log \frac \lambda 4)M^2+C(1+\beta \lambda^2 )M}.$$
\end{thm}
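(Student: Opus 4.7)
The plan is to iterate Lemma \ref{l3} along a geometric sequence of radii. I would set $r_k := \lambda^k/\sqrt{n\Delta}$ and $M_k := \lambda^{2k} M$, so that $r_0 = 1/\sqrt{n\Delta}$ and $M_0 = M$, and at each step take $(r,R) = (r_k, r_{k+1})$ in Lemma \ref{l3}. Since $n\Delta\lambda^2 r_k^2 = \lambda^{2k+2}$, the admissibility condition for the threshold $M_k$, namely $M_k \ge C(\lambda^{2k+2} + 1/\beta)/\log(\lambda/4)$, reduces to $M \ge C(\lambda^2 + \lambda^{-2k}/\beta)/\log(\lambda/4)$, which is implied uniformly in $k$ by the hypothesis \eqref{mcond}. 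Choose $K$ to be the smallest integer with $\lambda^{2K} M > n$; since there are only $n$ particles, the terminal probability $\Prob_n^\beta(N(p,r_K)\ge M_K)$ vanishes, and telescoping gives
$$\Prob_n^\beta\bigl(N(p, 1/\sqrt{n\Delta}) \ge M\bigr) \le \sum_{k=0}^{K-1} e_k, \qquad e_k := \exp\bigl(C'(1+\beta\lambda^{2k+2})M_k - \beta M_k (M_k-1)\log(\lambda/4)\bigr).$$

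Next I would show that this sum is controlled by its first term. The dominant contribution to $\log(e_{k+1}/e_k)$ is $-\beta(\lambda^4-1)\lambda^{4k}M^2\log(\lambda/4)$, which dwarfs every positive piece (all of order at most $\beta\lambda^{2k+4}M$) precisely because \eqref{mcond} forces $M\log(\lambda/4)$ to dominate $\lambda^2$. Hence $e_{k+1} \le \tfrac12 e_k$ for all admissible $k$, and therefore $\sum_k e_k \le 2 e_0$. It remains to rewrite
$$e_0 = \exp\bigl(-\beta M(M-1)\log(\lambda/4) + C'(1+\beta\lambda^2)M\bigr) \le \exp\bigl(-\beta(\log(\lambda/4))M^2 + C(1+\beta\lambda^2)M\bigr),$$
where the linear remainder $\beta M\log(\lambda/4)$ coming from the expansion of $M(M-1)$ is absorbed into the new constant using the trivial bound $\log(\lambda/4)\le \lambda^2$.

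The main obstacle is to check that the iteration stays within the hypotheses of Lemma \ref{l3}, namely $r_{k+1} < \eta/2$ and $D(p, r_{k+1}) \subset \Sigma_{\eta/2}$ (which is why the theorem requires the stronger constraint $p \in \Sigma_{2\eta}$). The choice of $K$ yields the crude bound $r_K \le \lambda/\sqrt{M\Delta}$; combining this with \eqref{mcond} and the standing lower bounds $\Delta \ge \Delta_0$, $\eta \ge \eta_0$, one verifies $r_K < \eta/2$ after possibly enlarging the constant in \eqref{mcond} in a manner depending only on $\Delta_0, \eta_0$. The margin $p \in \Sigma_{2\eta}$ then ensures $D(p, r_k) \subset \Sigma_{\eta/2}$ for every $k \le K$. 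In the borderline regime where $\lambda$ is so large that even one iteration step would violate $r_1 < \eta/2$, the same bounds force $\lambda^2 M > n$ outright, so a single application of Lemma \ref{l3} already gives the desired estimate. Uniformity of the final constant $C = C(\Delta_0, \eta_0)$ in $p \in \Sigma_{2\eta}$ is then automatic from the construction.
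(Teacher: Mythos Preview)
Your proposal is correct and follows essentially the same approach as the paper: iterate Lemma \ref{l3} along the geometric sequence $r_k=\lambda^k/\sqrt{n\Delta}$, note that the terminal probability vanishes once the threshold exceeds $n$, and bound the resulting sum by its first term. The only cosmetic difference is the stopping criterion---the paper fixes the terminal index $j_0$ by requiring $r_{j_0}^2\in[\eta_0/4,\eta_0/2)$ and then deduces $\lambda^{2j_0}M>n$ from \eqref{mcond}, whereas you stop at the smallest $K$ with $\lambda^{2K}M>n$ and then verify $r_K<\eta/2$; these are dual bookkeeping choices yielding the same estimate.
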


\begin{proof}
First, given a large enough $n$, we choose an index $j_0$ such that
$\eta_0/4\le \lambda^{2j_0}/(n\Delta)< \eta_0/2$.
Choosing $C>1/(\eta_0\Delta_0)$ in \eqref{mcond} we then obtain
$\lambda^{2k_0}M>n$, and consequently
$$\Prob_n^\beta(\{N(p,\eta/4)>\lambda^{2j_0}M\})=0.$$

Set $r_0=1/\sqrt{n\Delta}$ and consider the successively larger radii $r_j=\lambda^jr_0$.

A repeated application of
Lemma \ref{l3} gives, with $M$ as in \eqref{mcond},
\begin{align}\Prob_n^\beta(\{N(p,1/\sqrt{n\Delta})\ge M\})
\le \sum_{j=0}^{j_0-1} e^{C'(1+\lambda^{2(j+1)}\beta )\lambda^{2(j+1)}M-\beta \lambda^{2(j+1)}M(\lambda^{2(j+1)}M-1)\log\frac \lambda 4}.\label{number}
\end{align}

Bounding the sum in \eqref{number} by the first term and renaming constants, we finish the proof.
\end{proof}

\section{A preliminary upper bound on the one-point function}  \label{Prelb}
We shall now prove a uniform upper bound on the one-point function $R_n^\beta(p)$, by adapting the strategy behind \cite[Theorem 2]{Th}.

For this we fix a point $p\in\Sigma_{\eta}$ (see \eqref{sigman}) and a number $r$ with $0<r<1/(4\sqrt{n\Delta})$. Here we assume that $n$ is large enough that
$1/\sqrt{n\Delta}<\eta/2$.

We define a function $F$ on $\C^n$ by (with $z=(z_1,\ldots,z_n)$)
$$F(z) = \boldsymbol{1}_{D(p,r)}(z_1).$$

We apply Lemma \ref{mc} with $\mu = \mu_n$ being the uniform probability measure on the annulus
\begin{equation}\label{Annn}A_n:=A(\frac{1}{2\sqrt{n\Delta}}, \frac{1}{\sqrt{n\Delta}}-r)\end{equation} and with the index set $I=\{1\}$ (which satisfies \eqref{iass}) to get
$$\Prob_n^\beta(\{z_1 \in D(p,r)\}) \leq e^{-\beta C_{I,\mu,F}} \E_n^\beta[T_{I,\mu}F].$$

\begin{lem}\label{pointbnd}
There is a constant $C>0$ such that for all $r\le 1/(4\sqrt{n\Delta})$,
    $$T_{I,\mu}F(z_1) \leq Cr^2n\Delta\boldsymbol{1}_{D(p,1/\sqrt{n\Delta})}(z_1).$$
\end{lem}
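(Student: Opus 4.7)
\medskip

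\textbf{Proof plan for Lemma \ref{pointbnd}.} Since $I=\{1\}$ and $F$ depends only on $z_1$, the convolution operator acts simply as
$$T_{I,\mu}F(z_1)=\int_{\C}\1_{D(p,r)}(z_1+w)\,d\mu(w)=\mu\bigl(D(p-z_1,r)\bigr).$$
The strategy is to bound this in two complementary ways, giving respectively the $Cr^2n\Delta$ prefactor and the indicator of $D(p,1/\sqrt{n\Delta})$.

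\emph{Sup-norm bound on the density.} The measure $\mu=\mu_n$ has density
$$\rho=\frac{1}{(\tfrac 1{\sqrt{n\Delta}}-r)^2-\tfrac 1{4n\Delta}}\cdot \1_{A_n}$$
with respect to $dA$. Using the hypothesis $r\le 1/(4\sqrt{n\Delta})$ we obtain $\tfrac{1}{\sqrt{n\Delta}}-r\ge \tfrac{3}{4\sqrt{n\Delta}}$, so the denominator is at least $\tfrac{9}{16n\Delta}-\tfrac{1}{4n\Delta}=\tfrac{5}{16n\Delta}$. Hence $\|\rho\|_\infty\le \tfrac{16}{5}n\Delta$, and
$$T_{I,\mu}F(z_1)=\int_{D(p-z_1,r)}\rho\,dA\le \|\rho\|_\infty\cdot r^2\le C r^2 n\Delta.$$

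\emph{Support constraint.} For the indicator factor, suppose $T_{I,\mu}F(z_1)\ne 0$. Then there is some $w\in\supp\mu\subset A_n$ with $z_1+w\in D(p,r)$. Since $|w|\le \tfrac{1}{\sqrt{n\Delta}}-r$, the triangle inequality gives
$$|z_1-p|\le |w|+|z_1+w-p|\le \Bigl(\tfrac{1}{\sqrt{n\Delta}}-r\Bigr)+r=\tfrac{1}{\sqrt{n\Delta}},$$
i.e.\ $z_1\in D(p,1/\sqrt{n\Delta})$. Combining the two bounds yields the lemma.

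\emph{Remark on difficulty.} There is no substantive obstacle; the only subtlety is that the density $\rho$ could in principle blow up if the annulus $A_n$ degenerates, which is precisely why the hypothesis $r\le 1/(4\sqrt{n\Delta})$ is imposed---it keeps the inner and outer radii of $A_n$ of comparable size $\asymp 1/\sqrt{n\Delta}$, so the area of $A_n$ is of order $1/(n\Delta)$ and $\|\rho\|_\infty=\bigO(n\Delta)$.
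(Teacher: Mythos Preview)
Your proof is correct and follows essentially the same approach as the paper's own proof. The paper phrases the sup-norm bound as an application of Young's inequality ($\|T_{I,\mu}F\|_\infty\le\|\rho_n\|_\infty\|F\|_{L^1}$) and states the support conclusion more tersely, while you write out the integral $\mu(D(p-z_1,r))$ explicitly and supply the numerical constant $\tfrac{16}{5}$; the underlying argument is identical.
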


\begin{proof}
    Since $\mu_n = \rho_n\, dA$ and $||\rho_n||_\infty \leq Cn\Delta$ for some constant $C>0$ which is uniform for all $r\leq 1/(4\sqrt{n\Delta})$, we have by Young's inequality the  bound
    $$T_{I,\mu}F(z_1) \leq \|\rho_n\|_\infty \|F\|_{L^1(\mathbb{C})} \leq C r^2 n\Delta,\qquad (r\le \frac 1 {4\sqrt{n\Delta}}) .$$
    Moreover, the support of $F$ is only enlarged by at most a distance $\frac{1}{\sqrt{n\Delta}}-r$ after convolution by $\mu$, so that
    $\operatorname{supp} (T_{I,\mu}F) \subset \{z\,;\, z_1\in
    D(p, \frac{1}{\sqrt{n\Delta}})\}.$
\end{proof}

\begin{lem} \label{tock} We have for $0<r<1/(4\sqrt{n\Delta})$ that
    $$C_{I,\mu,F} \ge -1.$$
\end{lem}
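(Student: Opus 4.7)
The plan is to invoke Lemma \ref{triv} directly with $I=\{1\}$ and $\mu=\mu_n$. Once the hypotheses of that lemma are verified, it will yield the pointwise bound $T_{I,\mu_n}[H_n](z)\le H_n(z)+n\mu_n(Q_0)$, which rearranges to
$$H_n(z)-T_{I,\mu_n}[H_n](z)\ge -n\mu_n(Q_0)$$
on all of $\C^n$. Taking the infimum over $z\in\supp F$ will then give $C_{I,\mu,F}\ge -n\mu_n(Q_0)$, reducing the problem to bounding $n\mu_n(Q_0)$ by the constant $c=8/5$.

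First I would verify the hypotheses of Lemma \ref{triv}. The index-set condition \eqref{iass} requires $z_1\in\Sigma_{\eta/2}$ on $\supp F$, which holds since $p\in\Sigma_\eta$ and $|z_1-p|\le r<1/(4\sqrt{n\Delta})<\eta/2$ for sufficiently large $n$. The support condition $\supp\mu_n\subset\{|z|<\eta/2\}$ follows from $A_n\subset D(0,1/\sqrt{n\Delta})$ together with the standing assumption that $1/\sqrt{n\Delta}<\eta/2$.

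Next I would compute $\mu_n(Q_0)$ explicitly. For the uniform probability measure on an annulus $A(a,b)$, radial integration gives $\mu_n(Q_0)=\tfrac{\Delta(a^2+b^2)}{2}$. Substituting $a=1/(2\sqrt{n\Delta})$ and $b=1/\sqrt{n\Delta}-r$ yields
$$n\mu_n(Q_0)=\tfrac{1}{2}\bigl((1-r\sqrt{n\Delta})^2+\tfrac{1}{4}\bigr).$$
Since $r\sqrt{n\Delta}<1/4$, the right-hand side is at most $\tfrac{1}{2}(1+\tfrac{1}{4})=5/8$, comfortably below $8/5$.

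There is no serious obstacle here: the result is essentially immediate from Lemma \ref{triv} together with an elementary integral. The constant $c=8/5$ is in fact quite loose---even the crude bound $\mu_n(Q_0)\le\Delta R^2=1/n$ (with $R=1/\sqrt{n\Delta}$) noted in Section \ref{OCest} already gives $n\mu_n(Q_0)\le 1$, which would suffice.
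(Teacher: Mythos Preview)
Your proof is correct and follows essentially the same route as the paper: invoke Lemma \ref{triv} to reduce to bounding $n\mu_n(Q_0)$, then compute that integral explicitly over the annulus. Your closed-form $\mu_n(Q_0)=\tfrac{\Delta}{2}(a^2+b^2)$ in fact gives the sharper bound $n\mu_n(Q_0)\le 5/8$, whereas the paper records the looser $8/5$; the only cosmetic wrinkle is your phrase ``on all of $\C^n$'', since the bound from Lemma \ref{triv} is asserted only where \eqref{iass} holds---but as you note, this is satisfied on $\supp F$, which is all that is needed.
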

\begin{proof}
    By Lemma \ref{triv} we have for $z\in \C^n$ with $z_1\in \Sigma_{\eta/2}$ (with $I=\{1\}$ and $Q_0(w)=\Delta\cdot|w|^2$)
    $$T_{I,\mu}[H_n](z) \leq H_n(z) + n\mu(Q_{0}),$$
    so $$C_{I,\mu,F}\ge -n\mu(Q_0).$$

On the other hand, using the notation \eqref{Annn}, we deduce that the uniform probability measure $\mu$ on $A_n$ satisfies
    \begin{align*}\mu(Q_{0})&=\Delta\int_{A_n}|w|^2\left((\frac 1{\sqrt{n\Delta}}-r)^2-\frac 1 {4n\Delta}\right)^{-1}\, dA(w)\\
    &\leq \Delta \sup_{w\in A_n} |w|^2 = \Delta \left(\frac{1}{\sqrt{n\Delta}}-r\right)^2 \leq\frac{1}{n}.
    \end{align*}
\end{proof}

The preceding lemmas show that if $0<r<1/(4\sqrt{n\Delta})$, then
\begin{equation}\label{pr}\Prob_n^\beta(\{z_1 \in D(p,r)\}) \leq Cr^2n\Delta e^\beta \Prob_n^\beta(\{z_1 \in D(p,1/\sqrt{n\Delta})\}).\end{equation}

Replacing the index $1$ by an arbitrary $k\in\{1,\ldots,n\}$ in \eqref{pr}, and
 using that the number of particles among $\{z_j\}_1^n$ in a set $D$ is $$N(D)=\sum_{k=1}^n\1_{\{z_k\in D\}},$$
we obtain
\begin{align*}
    n\Prob_n^\beta &(\{z_1 \in D(p,1/\sqrt{n\Delta}\}) = \sum_{k=1}^n \Prob_n^\beta(\{z_k \in D(p,1/\sqrt{n\Delta})\}) \\
    &= \E_n^\beta \left[N(p,\frac{1}{\sqrt{n\Delta}})\right]= \sum_{l=1}^n l\Prob_n^\beta(\{N(p,\frac{1}{\sqrt{n\Delta}}) = l\})\\
    &\qquad \leq \sum_{l=1}^\infty l\Prob_n^\beta(\{N(p,\frac{1}{\sqrt{n\Delta}}) \geq l\}).
\end{align*}
Next, from Theorem \ref{oc} there is a constant $C=C(\Delta_0,\eta_0)$ such that when $l\ge C(1+1/\beta)$,
$$\Prob_n^\beta(\{N(p,\frac{1}{\sqrt{n\Delta}}) \geq l\}) \le e^{c'(1+\beta)l-c\beta l^2},$$
where $c,c'$ are some positive constants.

This gives
\begin{align*}\sum_{l=1}^\infty l\Prob_n^\beta &(N(p,\frac{1}{\sqrt{n\Delta}}) \geq l)\le \sum_{l=1}^{\lfloor C(1+1/\beta)\rfloor}l+\sum_{l=\lceil C(1+1/\beta)\rceil}le^{c'(1+\beta)l-c\beta l^2}\\
&\le C_1 \left((1+1/\beta)^2+e^{\frac {(c')^2(1+\beta)^2}{4c\beta}}\sum_{l=\lceil C(1+1/\beta)\rceil}le^{-c\beta(\ell-\frac {c'(1+\beta)}{2c\beta})^2}\right),
\end{align*}
with a new constant $C_1$. Let us denote by $B=B(C,\beta,c,c')$ the sum in the right hand side. A simple integral estimate gives
\begin{equation}\label{gaussian}B\leq C_2\left((1+1/\beta)^2+ e^{\frac {(c')^2(1+\beta)^2}{4c\beta}}(\frac 1 {c\beta}+\frac {c'(1+\beta)}{2c\beta}\frac 1 {\sqrt{c\beta}})\right).\end{equation}

An examination of the constants $c,c'$ in Theorem \ref{oc} shows that we can take $c'=100C$ and $c=\log(5/2)$, so that the above implies
$$B\leq C_3\left( \beta^{-2}+e^{C'\beta}\right)$$
with some large constants $C'\ET{,C_3}$.

We finally have, for $r>0$ small enough,
\begin{equation}\label{fjutt}\begin{split}\E_n^\beta[N(p,r)]&=n\Prob_n^\beta(\{z_1 \in D(p,r)\})\\
&\le Cn^2\Delta r^2e^{\beta }\Prob_n^\beta(\{z_1 \in D(p,1/\sqrt{n\Delta})\})\\
&\le CB\Delta nr^2 e^{\beta}.\\\end{split}
\end{equation}

We have proven the following result, which corresponds to \cite[Theorem 2]{Th} but gives additional quantitative information (for the given class of potentials) that we will exploit below.

\begin{thm} \label{1pb} There is a constant $C$ (depending on $\Delta_0$ and $\eta_0$) such that, for all $p$ in the neighbourhood $\Sigma_\eta$ of the droplet,
$$R_n^\beta(p)\le C^\beta (1+1/\beta^2)n\Delta.$$
\end{thm}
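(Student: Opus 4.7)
The proof is essentially assembled from the machinery developed in Sections \ref{OCest}--\ref{Prelb}, and my plan is to tie the threads together in three steps, with the main care needed in tracking the $\beta$-dependence of constants.

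First, I would apply Lemma \ref{mc} to the test function $F(z)=\mathbf{1}_{D(p,r)}(z_1)$ with $I=\{1\}$ and with $\mu=\mu_n$ the uniform probability measure on the annulus $A_n$ in \eqref{Annn}, feeding in the $L^\infty\cdot L^1$ Young bound from Lemma \ref{pointbnd} and the constant $c=8/5$ from Lemma \ref{tock}, to obtain exactly inequality \eqref{pr}. Using the permutation symmetry of $H_n$, the same bound holds with $z_1$ replaced by any $z_k$; summing over $k$ converts the probability that $z_1\in D(p,1/\sqrt{n\Delta})$ into $\E_n^\beta[N(p,1/\sqrt{n\Delta})]/n$, and the latter expectation may be written as $\sum_{\ell\ge 1}\ell\,\Prob_n^\beta(\{N(p,1/\sqrt{n\Delta})\ge \ell\})$.

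Second, I would plug Theorem \ref{oc} into that tail sum with a fixed choice of $\lambda$ (say $\lambda=10$). For $\ell$ below the threshold $C(1+1/\beta)$ in \eqref{mcond}, there is no useful overcrowding bound, so I would bound those terms trivially by $\ell$; for the remaining $\ell$, the Gaussian-type factor $e^{c'(1+\beta)\ell-c\beta\ell^2}$ can be summed against a shifted Gaussian integral after completing the square, producing the estimate $B\lesssim (1+1/\beta)^2+e^{(c')^2(1+\beta)^2/(4c\beta)}\cdot(\beta^{-1}+(1+\beta)\beta^{-3/2})$ in \eqref{gaussian}, which simplifies to $B\lesssim \beta^{-2}+e^{C'\beta}\le C_1^\beta(1+\beta^{-2})$ for a suitable $C_1$.

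Third, inserting this into \eqref{fjutt} gives $\E_n^\beta[N(p,r)]\le C^\beta(1+\beta^{-2})n\Delta\,r^2$ for all sufficiently small $r$, after absorbing the factor $e^{\beta c}$ from Lemma \ref{tock}. Dividing by $r^2$ and letting $r\to 0$ in the definition of $R_n^\beta$ produces the claimed bound.

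The only truly delicate point is the second step: keeping careful track of the constants so that both the ``small-$\beta$ blow-up'' $\beta^{-2}$ (coming from the low-$\ell$ contribution where Theorem \ref{oc} does not apply) and the ``large-$\beta$ blow-up'' $e^{C'\beta}$ (coming from the Gaussian peak at $\ell\sim c'(1+\beta)/(2c\beta)$) combine into the concise form $C^\beta(1+1/\beta^2)$. Everything else is a routine synthesis of the preceding lemmas.
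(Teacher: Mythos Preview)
Your proposal is correct and follows the paper's own argument essentially step for step: the same convolution bound \eqref{pr}, the same passage from $\Prob_n^\beta(\{z_1\in D(p,1/\sqrt{n\Delta})\})$ to $\E_n^\beta[N(p,1/\sqrt{n\Delta})]$ via symmetry, the same splitting of the tail sum at the threshold $C(1+1/\beta)$ with Theorem \ref{oc} handling the large-$\ell$ part, and the same simplification $B\lesssim \beta^{-2}+e^{C'\beta}$. One wording quibble: the expectation is not literally equal to $\sum_{\ell\ge 1}\ell\,\Prob_n^\beta(\{N\ge\ell\})$ but is \emph{bounded above} by it (via $\Prob(N=\ell)\le\Prob(N\ge\ell)$), which is exactly the inequality the paper uses and is in the right direction for your purposes.
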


\section{Upper bound} \label{local} We now prove Theorem \ref{loca}. Our proof combines the upper bound in Theorem \ref{1pb} with an idea from \cite[Section 3]{A}.

We start with two basic lemmas, which are adapted from \cite[Section 2]{A}.

We use the symbol $\calW_n$ to denote the linear space of weighted polynomials $f=q\cdot e^{-nQ/2}$ where $q(z)$ is a holomorphic polynomial of degree at most $n-1$.

\begin{lem} \label{silem} If $f\in\calW_n$ and $D(z,1/\sqrt{n\Delta})\subset\Sigma$
then
$$|f(z)|^{2\beta}\le n\Delta e^\beta\int_{D(z,1/\sqrt{n\Delta})}|f|^{2\beta}\, dA.$$
\end{lem}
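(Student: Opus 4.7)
The plan is to reduce the inequality to the sub-mean-value property for a subharmonic function, after absorbing the non-holomorphic part of the weight into a holomorphic factor so that the resulting function is again holomorphic.

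Fix $z$ with $D:=D(z,1/\sqrt{n\Delta})\subset \Sigma$. Since $h$ is harmonic in a neighbourhood of $\Sigma$ and $D$ is simply connected, I can choose a holomorphic function $H_0$ on a neighbourhood of $D$ such that $h=\re H_0$ there. Using the elementary identity
$$|w|^2=|w-z|^2+2\re(\bar z(w-z))+|z|^2,$$
I can rewrite, on that neighbourhood,
$$Q(w)=\Delta|w-z|^2+\re G_z(w),\qquad G_z(w):=H_0(w)+\Delta|z|^2+2\Delta\bar z(w-z),$$
where $G_z$ is holomorphic. Define the auxiliary weighted polynomial
$$\tilde f(w):=q(w)\,e^{-nG_z(w)/2}.$$
Then $\tilde f$ is holomorphic on a neighbourhood of $D$, and a direct computation gives the key pointwise relation
$$|\tilde f(w)|^{2\beta}=|f(w)|^{2\beta}\,e^{n\beta\Delta|w-z|^2},\qquad |\tilde f(z)|^{2\beta}=|f(z)|^{2\beta}.$$

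Next, since $\tilde f$ is holomorphic and $\beta>0$, the function $|\tilde f|^{2\beta}=e^{2\beta\log|\tilde f|}$ is subharmonic on $D$ (being the exponential of a nonnegative multiple of the subharmonic function $\log|\tilde f|$). Applying the sub-mean-value inequality on the disc $D$ of $dA$-measure $1/(n\Delta)$,
$$|\tilde f(z)|^{2\beta}\le n\Delta\int_D|\tilde f|^{2\beta}\,dA.$$
Finally, on $D$ we have $|w-z|\le 1/\sqrt{n\Delta}$, so $e^{n\beta\Delta|w-z|^2}\le e^{\beta}$, and therefore
$$|f(z)|^{2\beta}=|\tilde f(z)|^{2\beta}\le n\Delta\int_D |f|^{2\beta}\,e^{n\beta\Delta|w-z|^2}\,dA\le e^\beta n\Delta\int_D|f|^{2\beta}\,dA,$$
which is the claimed bound.

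The only non-routine point is the introduction of $\tilde f$: the natural attempt of applying sub-mean value directly to the subharmonic function $|q|^{2\beta}$ and then multiplying by $e^{-n\beta Q(z)}$ loses a factor $e^{n\beta(Q(w)-Q(z))}$ which is of order $e^{O(\sqrt{n/\Delta})}$ on $D$, far too large. The holomorphic correction $e^{-nG_z/2}$ removes exactly the affine-plus-harmonic part of $nQ$, leaving the well-behaved gaussian factor $e^{n\Delta|w-z|^2}$ that costs only $e^\beta$ on the relevant disc. Everything else is the standard sub-mean-value property together with the normalization $dA=\pi^{-1}dxdy$.
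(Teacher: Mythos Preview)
Your proof is correct and follows essentially the same route as the paper's. The paper defines $F(w)=|f(w)|^{2\beta}e^{\Delta n\beta|z-w|^2}$ and checks directly that $\d\dbar\log F\ge 0$, whereas you realize $F=|\tilde f|^{2\beta}$ for an explicit holomorphic $\tilde f$; both then apply the sub-mean-value inequality on the disc of $dA$-area $1/(n\Delta)$ and bound the Gaussian factor by $e^\beta$.
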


\begin{proof} Fix $z$ as above and consider the function
$$F(w)=|f(w)|^{2\beta}e^{\Delta n\beta|z-w|^2}$$
which has, for $w\in D(z,1/\sqrt{n\Delta})$,
$$\d\dbar \log F(w)\ge 0+\Delta n\beta>0.$$

Thus $F$ is (logarithmically) subharmonic and the mean-value inequality at $w=z$ gives
$$|f(z)|^{2\beta}=F(z)\le n\Delta\int_{D(z,1/\sqrt{n\Delta})}F\, dA\le n\Delta e^\beta\int_{D(z,1/\sqrt{n\Delta})}|f|^{2\beta}\, dA.$$
\end{proof}

We also recall the following well known ``maximum principle'' (for a proof, see \cite{ST} or \cite[Lemma 2.3]{A}).

\begin{lem}\label{localem} If $f\in\calW_n$ then (with $S$ the droplet)
$$|f(z)|\le (\sup\{|f(w)|\,;\, w\in S\})\cdot e^{-n\Qeff(z)/2},$$
with $\Qeff$ being the effective potential \eqref{qeff}.
\end{lem}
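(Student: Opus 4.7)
The plan is to reduce the lemma to a standard application of the maximum principle on $\C\setminus S$. Writing $f=q\cdot e^{-nQ/2}$ with $\deg q\le n-1$ and using $\Qeff=Q-\check{Q}$, I observe that the desired estimate is equivalent to
$$|q(z)|\,e^{-n\check{Q}(z)/2}\le \sup_{w\in S}|f(w)|,\qquad z\in\C.$$
Thus the natural object to analyze is
$$u(z):=\log|q(z)|-\tfrac{n}{2}\check{Q}(z),$$
and the task becomes to prove $u(z)\le \sup_{w\in S}\log|f(w)|$ pointwise on $\C$.

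On the droplet the bound is immediate: the standing assumption $S^*=S$ gives $\check{Q}=Q$ on $S$, so $u(z)=\log|f(z)|$ for $z\in S$. For $z\in\C\setminus S$ I would invoke the standard fact from weighted potential theory (see \cite{ST}) that the obstacle function $\check{Q}$ is harmonic on $\C\setminus S^*=\C\setminus S$: indeed, where $\check{Q}<Q$ the obstacle is inactive, and maximality of $\check{Q}$ among admissible subharmonic minorants with logarithmic growth forces it to be harmonic at every such point. Since $\log|q|$ is subharmonic on $\C$, the function $u$ is therefore subharmonic on $\C\setminus S$.

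To close the argument I would control $u$ at infinity. The asymptotic $\check{Q}(z)=2\log|z|+O(1)$ combined with $\deg q\le n-1$ yields
$$u(z)\le (n-1)\log|z|-n\log|z|+O(1)=-\log|z|+O(1),$$
which tends to $-\infty$ as $z\to\infty$. The classical maximum principle for subharmonic functions on the unbounded domain $\C\setminus S$ (with $u\to-\infty$ at infinity) then forces $u(z)\le \sup_{w\in\d S}u(w)$, and since $\d S\subset S$ and $u(w)=\log|f(w)|$ there, we obtain $u(z)\le \sup_{w\in S}\log|f(w)|$. Exponentiating this inequality gives the lemma.

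The only non-elementary input is the harmonicity of $\check{Q}$ off the coincidence set; this is the main (but standard) ingredient from the Frostman/obstacle analysis in \cite{ST}, and once it is granted the remaining steps are purely formal. A variant of the proof bypassing this point would be to work directly with the Perron envelope definition of $\check{Q}$ and test against the candidate subharmonic function $\tfrac{2}{n}\log|q(z)/M|$ (where $M=\sup_S|f|e^{nQ/2}$); this expresses the lemma as the tautology $\tfrac{2}{n}\log|q(\cdot)/M|\le \check{Q}$, reducing verification to the defining comparison on $S$ and at infinity.
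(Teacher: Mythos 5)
Your main argument is correct and is the standard Bernstein--Walsh style proof of this classical lemma (the paper itself does not prove it but cites \cite{ST} and \cite[Lemma 2.3]{A}, which run along the same lines): reduce to showing $u=\log|q|-\tfrac n2\check Q\le\log\sup_S|f|$, use $\check Q=Q$ on $S=S^*$ to get the bound on $S$, use harmonicity of $\check Q$ on $\C\setminus S$ to get subharmonicity of $u$ there, and close with the growth estimate $u(z)\le-\log|z|+O(1)$ and the maximum principle on each component of $\C\setminus S$.

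One small caveat concerning the variant you sketch at the end: testing $s:=\tfrac 2n\log|q/M|$ directly against the Perron envelope definition of $\check Q$ does \emph{not} bypass the harmonicity of $\check Q$ off $S$. The envelope definition requires $s\le Q$ on all of $\C$, whereas what you can verify from $M=\sup_S|f|$ is only $s\le Q$ on $S$; promoting this to $s\le\check Q$ globally (the ``principle of domination'') again uses the maximum principle on $\C\setminus S$ and hence the harmonicity of $\check Q$ there. So the two arguments are really the same argument, not an alternative that avoids the potential-theoretic input. This does not affect the validity of your main proof.
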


With these preparations out of the way, we
fix a point $z$ in the interior of $\Sigma$ and pick $\eps>0$ small enough that the disc $W=D(z,\eps)$ is contained in the interior of $\Sigma$. We define
$\delta=\dist(S,W)$.

Let $\{z_j\}_1^n$ be a random sample with respect to \eqref{bgm} and fix an index $j$, $1\le j\le n$.

We write
\begin{equation}\label{lagp}\ell_j(w)=\prod_{i\ne j}((w-z_i)e^{-n(Q(w)-Q(z_i))/2})/\prod_{i\ne j}((z_j-z_i)e^{-n(Q(z_j)-Q(z_i))/2}),\end{equation}
for the random weighted Lagrange polynomial satisfying $\ell_j(z_k)=\delta_{jk}$. (The $\ell_j$'s are well-defined elements of $\calW_n$ with probability one).

Consider the random variables
$$Y=\int_W|\ell_1|^{2\beta}\, dA,\qquad Z=\int_{\C}|\ell_1|^{2\beta}\, dA.$$

By Lemma \ref{silem} we have for all $w\in\C$ such that $D(w,1/\sqrt{n\Delta})\subset \Sigma$ that
\begin{equation}\label{uppl}|\ell_1(w)|^{2\beta}\le
e^\beta (n\Delta) Z.\end{equation}

In particular \eqref{uppl} holds for all $w\in S$ so by Lemma \ref{localem} we have
$$|\ell_1(w)|^{2\beta}\le e^\beta (n\Delta)Ze^{-n\beta \Qeff(w)},\qquad (w\in\C).$$

Integrating the last inequality over the disc $W$ using that $|W|=\eps^2$, we find
\begin{equation}\label{wefi}Y\le e^\beta(n\Delta)\eps^2Z\max_{w\in W}\{e^{-n\beta \Qeff(w)}\}.\end{equation}

We now recall the following result, proven in \cite[Lemma 2.5]{A}.

\begin{lem} \label{lead} Let $W$ be measurable subsets of $\C$. Then
\begin{equation}\label{gott}\E_n^\beta\left[\int_W|\ell_j(z)|^{2\beta}\, dA(z)\right]=|\Sigma|\cdot p_n^\beta(W)\end{equation}
where $|\Sigma|$ is the normalized area of $\Sigma=\{Q<+\infty\}$ and where $p_n^\beta$ is the 1-point measure,
$$p_n^\beta(W):=\Prob_n^\beta(\{z_1\in W\}).$$
\end{lem}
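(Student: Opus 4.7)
The strategy is to exhibit a direct algebraic identity showing that multiplication by $|\ell_j(\zeta)|^{2\beta}$ acts on the Boltzmann weight as a substitution that replaces the $j$-th particle position $z_j$ by $\zeta$. More precisely, I would try to establish
$$
|\ell_j(\zeta)|^{2\beta}\cdot e^{-\beta H_n(z_1,\ldots,z_n)} = e^{-\beta H_n(z_1,\ldots,z_{j-1},\zeta,z_{j+1},\ldots,z_n)},
$$
valid pointwise for $z_j\in \Sigma$ (and with both sides vanishing for $z_j\notin \Sigma$, given the implicit $e^{-n\beta Q(z_j)}$ factor in the original weight).

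To verify the identity, I would expand $|\ell_j(\zeta)|^{2\beta}$ from \eqref{lagp} as
$$
\prod_{i\ne j}|\zeta-z_i|^{2\beta}\cdot \prod_{i\ne j}|z_j-z_i|^{-2\beta}\cdot e^{-n\beta(Q(\zeta)-Q(z_j))},
$$
and then match factor by factor against $e^{-\beta H_n(\bar z)}=\prod_{i<k}|z_i-z_k|^{2\beta}\cdot e^{-n\beta\sum_i Q(z_i)}$. The pairs in the Vandermonde-type product that involve the index $j$ collect exactly to $\prod_{i\ne j}|z_j-z_i|^{2\beta}$, cancelling the Lagrange denominator; the numerator factor $\prod_{i\ne j}|\zeta-z_i|^{2\beta}$ installs the new pair interactions between $\zeta$ and the remaining particles; and the exponential prefactor $e^{-n\beta(Q(\zeta)-Q(z_j))}$ swaps $Q(z_j)$ for $Q(\zeta)$ in the one-body sum. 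What is left is precisely $e^{-\beta H_n(\tilde z)}$ with $\tilde z = (z_1,\ldots,z_{j-1},\zeta,z_{j+1},\ldots,z_n)$.

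Once the identity is in hand, the remainder is routine. The right-hand side does not depend on $z_j$, so by Fubini
$$
Z_n^\beta\cdot \E_n^\beta\!\left[|\ell_j(\zeta)|^{2\beta}\right] = \int_{\Sigma} dA(z_j) \int_{\C^{n-1}} e^{-\beta H_n(\tilde z)}\prod_{i\ne j}dA(z_i)= |\Sigma|\cdot Z_n^\beta\cdot r_j(\zeta),
$$
where $r_j$ is the marginal density of $z_j$ under $\Prob_n^\beta$ and the outer integration is over $\Sigma$ because the integrand inherits the support of the original Boltzmann weight. Integrating over $\zeta\in W$ and using the permutation symmetry of $H_n$ to conclude that $\int_W r_j\,dA = \Prob_n^\beta(\{z_j\in W\})=p_n^\beta(W)$ gives \eqref{gott}.

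The main step is the algebraic identity of the first paragraph; the rest is bookkeeping. The one subtle point is the implicit restriction $z_j\in \Sigma$: formally $|\ell_j|^{2\beta}\cdot e^{-\beta H_n}$ is an $\infty\cdot 0$ expression for $z_j\notin\Sigma$, and one needs to see that the correct convention is that it vanishes there, so that the $z_j$-integral over $\C$ collapses to the $z_j$-integral over $\Sigma$, producing the factor $|\Sigma|$ rather than a divergent one.
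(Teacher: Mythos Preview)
Your proof is correct. The key algebraic identity
\[
|\ell_j(\zeta)|^{2\beta}\, e^{-\beta H_n(z_1,\ldots,z_n)} = e^{-\beta H_n(z_1,\ldots,z_{j-1},\zeta,z_{j+1},\ldots,z_n)}
\]
is exactly the mechanism behind this lemma, and your factor-by-factor verification is sound (provided one reads the definition \eqref{lagp} so that $\ell_j\in\calW_n$, i.e.\ with a single factor $e^{-n(Q(w)-Q(z_j))/2}$ rather than $(n-1)$ copies). The Fubini step and the handling of the support in $z_j$ are also fine: the original Boltzmann weight vanishes for $z_j\notin\Sigma$, so the $z_j$-integral collapses to $\int_\Sigma dA=|\Sigma|$, and what remains is precisely the $j$-th marginal evaluated at $\zeta$.

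The paper itself does not prove this lemma here; it simply cites \cite[Lemma 2.5]{A}. Your argument is the standard one and is essentially what appears in that reference.
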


The infinitesimal form of \eqref{gott} is
\begin{equation}\label{inff}\E_n^\beta[|\ell_j(z)|^{2\beta}]=|\Sigma|\frac 1 n R_n^\beta(z).\end{equation}

Taking expectations in \eqref{wefi}, dividing through by $\eps^2$ and sending $\eps\to 0$ we have
$$\E_n^\beta[|\ell_j(z)|^{2\beta}]\le e^{\beta}(n\Delta)e^{-n\beta \Qeff(z)}\int_\C \E_n^\beta[|\ell_j(w)|^{2\beta}]\, dA(w).$$

Using \eqref{inff} and that $\int_\C R_n^\beta\, dA=n$, we find that, for all $z\in \C$,
$$\frac {|\Sigma|} n R_n^\beta(z)\le |\Sigma|e^\beta (n\Delta)e^{-n\Qeff(z)}.$$
Combining with the bound for $z\in S$ found in Theorem \ref{1pb}, we finish the proof of Theorem \ref{loca}.
$\qed$

\begin{rem} While the pointwise expectation of $|\ell_j(z)|^{2\beta}$ is bounded as $n\to\infty$ (by \eqref{inff} and Theorem \ref{loca}) we anticipate that the maximum $\max_{1\le j\le n}\|\ell_j\|_\infty$ is almost-surely bounded as $n\to\infty$ if and only if we are in the \textit{perfect freezing regime}
$$\liminf_{n\to\infty} \frac {\beta_n}{\log n}>0.$$ See \cite{AMR,AR} for several results in this direction.
\end{rem}

\section{Equicontinuity of the $1$-point functions} \label{locals} We shall now prove Theorem \ref{mth}.

Suppose that $z,w$ are two points in the interior of $\Sigma$. We assume that the points are close enough to each other, in the sense that the
line-segment $[z,w]$ from $z$ to $w$ lies wholly in the interior of $\Sigma$, at a positive distance of at least $\eta>0$ from the boundary $\d\Sigma$.

Let $\{z_j\}_1^n$ be a random sample from \eqref{bgm} and let $\ell_j(z)$ be the random Lagrange function in \eqref{lagp}.

By the fundamental theorem of calculus,
\begin{equation}\label{labil}|\ell_j(w)|^{2\beta}-|\ell_j(z)|^{2\beta}=\int_{[z,w]} 2\beta |\ell_j(\zeta)|^{2\beta-1}\nabla (|\ell_j|)(\zeta)|\bigcdot(d\zeta),\end{equation}
where the integration is over the straight line-segment, possibly making infinitesimal detours around zeros of $\ell_j$ on that line-segment, to ensure that
$|\ell_j|$ is differentiable on the interior of the line-segment.

Let us write $A_{n\Delta}(f,\zeta)$ for the average value of a function $f$ over the disc $D(\zeta,1/\sqrt{n\Delta})$,
$$A_{n\Delta}(f,\zeta)=\fint_{D(\zeta,1/\sqrt{n\Delta})}f\, dA:=n\Delta\int_{D(\zeta,1/\sqrt{n\Delta})}f\, dA.$$

The following lemma gives a Bernstein estimate, of a similar type as in \cite[Lemma 2.3]{AR}, but with a more precise $\Delta$-dependence of the implied constant.

\begin{lem} \label{lastbil} If $f\in\calW_n$ and $f(p)\ne 0$, where $p$ has distance at least $1/\sqrt{n\Delta}$ to $\C\setminus \Sigma$, then
$$|\nabla|f|(p)|\le C\sqrt{n\Delta}A_{n\Delta}(|f|,p),$$
where $C$ is an absolute constant.
\end{lem}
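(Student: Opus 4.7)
The plan is to reduce the Bernstein-type bound on $|\nabla|f||$ to a Cauchy estimate for a genuinely holomorphic function, using the quasi-harmonic structure of $Q$ together with a gauge transformation that absorbs the first-order terms produced by the weight.

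First, since $h$ is harmonic in a neighbourhood of $\Sigma$ and the disk $D(p,1/\sqrt{n\Delta})$ is a simply connected subset of that neighbourhood, I can write $h=2\re H$ for some function $H$ holomorphic on this disk. Setting $F(z):=q(z)e^{-nH(z)}$, the function $F$ is holomorphic there and
$$|f(z)|=|F(z)|\,e^{-n\Delta|z|^2/2}.$$
Differentiating $|f|^2=|F|^2e^{-n\Delta|z|^2}$ using $\d_z\bar F=0$ yields $\d_z|f|^2=\bigl(F'/F-n\Delta\bar z\bigr)|f|^2$, and since $|\nabla|f||=|\d_z|f|^2|/|f|$, one obtains
$$|\nabla|f|(p)|=|f(p)|\,\Bigl|\frac{F'(p)}{F(p)}-n\Delta\,\bar p\Bigr|.$$

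Next I introduce the gauge transformation $G(z):=F(z)\,e^{-n\Delta\bar p(z-p)}$, which is still holomorphic on $D(p,1/\sqrt{n\Delta})$. A direct computation gives $G(p)=F(p)$ and $G'(p)=F'(p)-n\Delta\bar p\,F(p)$, so the previous line becomes
$$|\nabla|f|(p)|=|f(p)|\cdot\frac{|G'(p)|}{|G(p)|}.$$
Using the identity $|z|^2/2-\re[\bar p(z-p)]=|z-p|^2/2+|p|^2/2$, one checks that
$$|G(z)|=|f(z)|\,e^{n\Delta(|z-p|^2+|p|^2)/2}.$$
In particular $|G(p)|=|f(p)|e^{n\Delta|p|^2/2}$, and for $z\in D(p,1/\sqrt{n\Delta})$ the factor $e^{n\Delta|z-p|^2/2}$ is at most $e^{1/2}$, so $|G(z)|\le e^{1/2}e^{n\Delta|p|^2/2}|f(z)|$ on that disk.

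Finally I apply the standard Cauchy estimate to the holomorphic function $G$ on the disk of radius $r:=1/\sqrt{n\Delta}$ centred at $p$: using $|G(\zeta)|\le (2/r)\max_{\bar D(p,r/2)}|G|$ from the contour formula for $G'(p)$ and then the sub-mean-value inequality for the subharmonic function $|G|$ on disks of radius $r/2$ contained in $D(p,r)$, I obtain
$$|G'(p)|\le C_0\sqrt{n\Delta}\,A_{n\Delta}(|G|,p)$$
for an absolute constant $C_0$. Inserting the comparison $|G|\le e^{1/2}e^{n\Delta|p|^2/2}|f|$ into this integral and dividing by $|G(p)|=|f(p)|e^{n\Delta|p|^2/2}$, the troublesome Gaussian factor $e^{n\Delta|p|^2/2}$ cancels exactly, leaving
$$|\nabla|f|(p)|\le C\sqrt{n\Delta}\,A_{n\Delta}(|f|,p)$$
with $C=C_0\,e^{1/2}$. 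The only delicate point is this cancellation; it is precisely what forces the choice of gauge $G=Fe^{-n\Delta\bar p(z-p)}$ and, consequently, what pins down the sharp $\sqrt{n\Delta}$-dependence in the constant.
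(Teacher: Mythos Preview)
Your proof is correct and follows essentially the same approach as the paper: your gauged holomorphic function $G(z)=F(z)e^{-n\Delta\bar p(z-p)}$ coincides, up to a multiplicative constant, with the paper's $q(z)e^{-nH_p(z)/2}$ (where $H_p$ is the holomorphic Taylor jet of $Q$ at $p$), so the reduction to a Cauchy estimate is identical in spirit. The only technical difference is how the circle estimate is converted to a disk average---you invoke the sub-mean-value inequality for $|G|$ on half-radius disks, whereas the paper integrates the Cauchy bound over an annulus of radii; both yield comparable absolute constants (your $8\sqrt{e}$ versus the paper's $4\sqrt{e}$).
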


\begin{proof} Set $f=q\cdot e^{-nQ/2}$.

For $p$ as above and $|z-p|<1/\sqrt{n\Delta}$ we write $H_p(z)$ for the holomorphic function
$$H_p(z)=Q(p)+2\sum_{k=1}^\infty \frac {[\d^k Q](p)}{k!}(z-p)^k.$$

By Taylor's formula,
\begin{equation}\label{t0}n|Q(z)-\re H_p(z)|\le n\Delta|z-p|^2\le 1,\end{equation}
when $|z-p|\le 1/\sqrt{n\Delta}$.

By well known computations, which can be found in \cite[proof of Lemma 2.3]{AR}, we have
$$|\nabla|f|(p)|=\left|\frac d {dz}(q(z)e^{-nH_p(z)/2})\bigm|_{z=p}\right|.$$

Using a Cauchy estimate we deduce that if $1/(2\sqrt{n\Delta})\le r\le 1/\sqrt{n\Delta}$ then
\begin{align*}\left|\frac d {dz}(q(z)e^{-nH_p(z)/2})\bigm|_{z=p}\right|&=\frac 1 {2\pi}\left|\int_{|z-p|=r}\frac {q(z)e^{-nH_p(z)/2}}{(z-p)^2}\, dz\right|\\
&\le \frac {2n\Delta} \pi\int_{|z-p|=r}|q|e^{-n\re H_p/2}\,|dz|.\end{align*}

By \eqref{t0} the last expression is dominated by
$$\frac {2n\Delta} \pi e^{1/2}\int_{|z-p|=r}|f(z)|\,|dz|.$$

Integrating the last inequality in $r$ over $1/(2\sqrt{n\Delta})\le r\le 1/\sqrt{n\Delta}$ we obtain
\begin{align*}|\nabla|f|(p)|&\le\frac {4(n\Delta)^{3/2}}\pi e^{1/2}\int_{1/(2\sqrt{n\Delta})}^{1/\sqrt{n\Delta}}dr\int_{|z-p|=r}|f(z)|\,|dz|\\
&\le 4\sqrt{en\Delta}A_{n\Delta}(|f|,p),
\end{align*}
proving the lemma with $C=4\sqrt{e}$.
\end{proof}

Armed with the lemma, we now turn to our proof of equicontinuity of the one-point functions. (We remark that the condition $\beta\ge 1/2$ has not been used above; it only enters in the following, concluding steps.)

By H\"{o}lder's inequality, we have for all $\zeta\in\C$, provided that $2\beta\ge 1$,
$$\E_n^\beta\left[|\ell_j(\zeta)|^{2\beta-1}A_{n\Delta}(|\ell_j|,\zeta)\right]\le (\E_n^\beta(|\ell_j(\zeta)|^{2\beta}))^{(1-1/2\beta)}(\E_n^\beta(A_{n\Delta}(|\ell_j|,\zeta)^{2\beta}))^{1/2\beta}.$$

Now suppose that $\zeta\in \Sigma_\eta$ where $\eta>0$ is given, and take $n$ large enough that $1/\sqrt{n\Delta}<\eta/2$.

If $2\beta\ge 1$, then by Jensen's inequality, 
\eqref{inff} and Theorem \ref{loca} (with a slightly larger $C$),
\begin{equation}\label{densiste}\E_n^\beta\left[(\fint_{D(\zeta,1/\sqrt{n\Delta})}|\ell_j|\, dA)^{2\beta}\right]\le \E_n^\beta\left[\fint_{D(\zeta,1/\sqrt{n\Delta})}|\ell_j|^{2\beta}\, dA\right]\le C^\beta\Delta|\Sigma|.\end{equation}

We conclude that
$$(\E_n^\beta(|\ell_j(z)|^{2\beta}))^{(1-1/2\beta)}(\E_n^\beta(A_{n\Delta}(|\ell_j|,\zeta)^{2\beta}))^{1/2\beta}\le C^\beta\Delta|\Sigma|.$$

Using \eqref{inff}, \eqref{labil} and Lemma \ref{lastbil}, we now estimate as follows:
\begin{align*}\frac {|\Sigma|} n |R_n^\beta(z)-R_n^\beta(w)|&=|\E_n^\beta[|\ell_j(z)|^{2\beta}-|\ell_j(w)|^{2\beta}]|\\
&\le C\beta\sqrt{n\Delta} \int_{[z,w]} |\E_n^\beta[|\ell_j(\zeta)|^{2\beta-1}A_{n\Delta}(|\ell_j|,\zeta)]|\,|d\zeta|\\
&\le C^\beta \Delta|\Sigma|\sqrt{n\Delta}|z-w|,\end{align*}
where we used \eqref{densiste} (and changed meaning of the constant $C$) to get the last inequality.
Our proof of Theorem \ref{mth} is complete. $\qed$

\section{Concluding remarks} \label{concrem}

In this section, we recall the rescaled Ward equation from \cite{AKM} and the related question of finding exact subsequential scaling limits $\rho^\beta=\lim \rho^\beta_{n_k}$. We
also compare
with the approximation by the thermal equilibrium measure, which is used in e.g.~\cite{Se} and references.

\subsection{Rescaling in Ward's identity and crystallization} In this section, we  do not assume that $Q$ is a Hele-Shaw potential, but
merely that $Q$ is lower semicontinuous and obeys the growth $Q(z)-2\log|z|\to\infty$ as $z\to\infty$ (and that $Q$ is finite on some set of positive capacity). In short, $Q/2$ is an ``admissible potential'' in the sense of \cite{ST}.

The Coulomb gas in external potential $Q$ (at inverse temperature $\beta$) is a configuration $\{z_j\}_1^n$ picked randomly with respect to the corresponding Gibbs distribution (see \eqref{bgm} and \eqref{hamn}).

To a first approximation, the Coulomb gas follows the Frostman equilibrium measure $\sigma=\sigma[Q]$ in potential $Q$, which (as before) is the unique compactly supported Borel probability measure minimizing the energy integral \eqref{wen}.

More precisely, if we assume that $Q$ is $C^2$-smooth in some neighbourhood of the droplet $S=\supp\sigma$ we have that $d\sigma=\d\dbar Q\cdot \1_S\, dA$ by Frostman's theorem \cite{ST} and a standard result implies the weak convergence
$\lim_{n\to\infty}\frac 1 n\E_n^\beta\sum_{j=1}^n f(z_j)= \sigma(f)$
for all continuous and bounded functions on $\C$, see e.g. \cite[Appendix A]{A}.

Moreover, a well-known Ward identity (see e.g. \cite{Z} and \cite{AHM,AKM}) asserts that for any suitable (e.g.~smooth and bounded) test-function $f$, the random variable
$$W_n^+[f]=\frac 1 \beta \sum_1^n \d f(z_j)-n\sum_{1}^n [f\d Q](z_j)+\frac 12 \sum_{j\ne k}\frac {f(z_j)-f(z_k)}{z_j-z_k}$$
has expectation zero:
\begin{equation}\label{wdid}\E_n^\beta[W_n^+[f]]=0.\end{equation}

When written in terms of the $1$-point function $R_n^\beta(z)$ and the $2$-point function $R_{n,2}^\beta(z,w)$, the identity \eqref{wdid} becomes
\begin{equation}\label{wdid2}\frac 1 \beta \int_\C \d f\cdot R_n^\beta\, dA-n\int_\C f\cdot \d Q\cdot R_n^\beta\, dA+\frac 1 2 \iint_{\C^2} \frac {f(z)-f(w)}{z-w}\, R_{n,2}^\beta(z,w)\, dA_2(z,w)=0.\end{equation}

It is advantageous to interpret \eqref{wdid2} as a distributional integro-differential equation for the (macroscopic) \textit{Berezin kernel}
\begin{equation}\label{nrb}\mathbf{B}_n^\beta(z,w)=\frac {R_n^\beta(z)R_n^\beta(w)-R_{n,2}^\beta(z,w)}{R_n^\beta(z)},\end{equation}
which satisfies $\mathbf{B}_n^\beta(z,z)=R_n^\beta(z)$ and $\int_\C \mathbf{B}_n^\beta(z,w)\, dA(w)=1$. This is done in sources such as \cite{AHM,AKM}.

Now fix any point $p\in\C$ such that $Q$ is smooth and strictly subharmonic at $p$, $\d\dbar Q(p)>0$.

We rescale as
in \cite{AKM}, i.e., we introduce a new ``microscopic'' variables $u$, $v$  by
\begin{equation*}z=p+\frac u {\sqrt{n\d\dbar Q(p)}},\qquad w= p+\frac v {\sqrt{n\d\dbar Q(p)}}.
\end{equation*}

In order to rescale in \eqref{wdid2}, it is convenient to introduce the rescaled one-point density and the rescaled Berezin kernel via
\begin{equation}\label{microrho}\rho_n^\beta(u)=\frac {R_n^\beta(z)}{n\d\dbar Q(p)},\qquad B_n^\beta(u,v)=\frac 1 {n\d\dbar Q(p)}\frac{R_n^\beta(z)R_n^\beta(w)-R_{n,2}^\beta(z,w)}{R_n^\beta(z)}.\end{equation}
Note that $B_n^\beta(u,u)=\rho_n^\beta(u)$.

In \cite[Theorem 7.5]{AKM} it is noted that \eqref{wdid2} gives rise to the following asymptotic relation for $B_n^\beta(u,v)$,
\begin{equation}\label{topp}\dbar_u\int \frac {B_n^\beta(u,v)}{u-v}\, dA(v)=B_n^\beta(u,u)-1-\frac 1 \beta\d_u\dbar_u \log B_n^\beta(u,u)+o(1),\end{equation}
where $o(1)\to 0$ uniformly on compact subsets of $\C$ as $n\to\infty$.

Assuming that $B_n^\beta(u,v)$ converges in an appropriate sense along some subsequence $n_k$ to an appropriate limit $B^\beta(u,v)$ (with $\rho^\beta(u):=B^\beta(u,u)>0$) we obtain the formal limiting Ward equation for
$B^\beta(u,v)$,
\begin{equation}\label{vopp}\dbar_u\int \frac {B^\beta(u,v)}{u-v}\, dA(v)=B^\beta(u,u)-1-\frac 1 \beta\d_u\dbar_u \log B^\beta(u,u).\end{equation}

While a true limiting Berezin kernel must satisfy \eqref{vopp}, there is also another natural condition, the ``integral-$1$ condition''
\begin{equation}\label{mass1}\int_\C B^\beta(u,v)\, dA(v)\equiv 1,\end{equation}
which is motivated by the fact that $\int_\C B_n^\beta(u,v)\, dA(v)=1$ for all $n$.

When $\beta=1$ the derivations of \eqref{vopp}, \eqref{mass1} are rigorous: normal families arguments and the zero-one law in \cite{AKM} ensure existence of appropriate (locally uniform) subsequential limits satisfying \eqref{buck} provided that
the limit $\rho^\beta$ does not vanish identically.

When $\beta\ne 1$, it is an open question whether it is possible to pass to a subsequential limit satisfying \eqref{vopp}, \eqref{mass1}, even when rescaling at about point in the bulk.

In what follows, we will simply assume that a nontrivial (i.e. $\rho^\beta>0$ everywhere) microscopic Berezin kernel $B^\beta$ exists along some subsequence, for a fixed $\beta$, and that $B^\beta$ satisfies \eqref{vopp} and \eqref{mass1}.

We shall now recall, from \cite[Section 7.3]{AKM}, two exact solutions to \eqref{vopp} which we term ``reference solutions'', and which are relevant for $p$ a bulk point and a regular boundary point, respectively.

\subsubsection*{Reference solution for the bulk} From \eqref{vopp}, we see that any function of the form $B^\beta(u,v)=F_\beta(|u-v|)$ is an exact solution, provided that $F_\beta(r)$ is sufficiently regular and small as $r\to\infty$. It is also natural to require that $F_\beta(0)=1$.

Among these solutions a natural candidate is the Gaussian
\begin{equation}\label{isd}B^\beta_{\refb}(u,v):=e^{-\beta|u-v|^2}.\end{equation}
This solution is indeed the correct limiting Berezin kernel when $\beta=1$, but not for $\beta\ne 1$, since $\int_\C B^\beta_{\refb}(u,v)\, dA(v)=1/\beta$ in contradiction with the integral-$1$ equation.

Now suppose that $p_n$ is a sequence of zooming-points converging as $n\to\infty$ to a limit $p$ in the bulk $\Int S$, and that $B^\beta=\lim_{k\to\infty} B^{\beta_{n_k},p_{n_k}}$ is an exact scaling limit. If $\beta\ne 1$ it
is, as far as we know, not known whether $B^\beta$ depends on the subsequence.

To put it more concretely, when $\beta>1$, the density profile near the boundary is expected to display a certain oscillatory behaviour as one moves inwards from the boundary (``crystallization''). The oscillations appear to be on the scale of the interparticle distance and decrease quickly in magnitude as one moves inwards, towards the bulk, see \cite{CFTW,CSA}. It is currently an open question whether or not these oscillations persist, to some extent, also in the bulk. If they do persist, then a bulk scaling limit should depend on whether we the zooming points $p_n$ trace a local ``peak'' or ``trough'' of the density. In other words, the question of \textit{uniqueness} of a bulk scaling limit $\{z_j\}_1^\infty$ (or ``infinite $\beta$-Ginibre ensemble'') is, to the best of our knowledge, an open matter, except when $\beta=1$.

\subsubsection*{Reference solution for the boundary}

Another exact solution to \eqref{vopp} found in \cite{AKM}, relevant at a regular boundary point $p$ 
has the appearance
\begin{equation}\label{retain}B^\beta_{\refbo}(u,v):=e^{-\beta|u-v|^2}\frac 1 2 \erfc(\sqrt{\frac \beta 2 }\re (u+v)),\end{equation}
which has ``nearly'' the behaviour one would expect at the edge, and which is in fact exactly right when $\beta=1$. However, in \cite[Section 7.3]{AKM} it is observed that $\int B^\beta_{\refbo}(u,v)\, dA(v)=1/\beta$, so this solution can again not
be a true subsequential limit when $\beta\ne 1$.

In addition, it is not hard to see that \eqref{retain} is consistent with the sum rule in \cite[Eq. (4.11)]{BF} if and only if $\beta=1$, proving in another way that \eqref{retain} is not a feasible scaling limit when $\beta\ne 1$. Incidentally, for $\beta=1$, the sum rule reduces to the ``$1/8$-formula'' in the parlance of \cite{AKM}. We thank S.-S. Byun for this observation.

However, by comparing with numerical results for the diagonal values $\rho^\beta(u)=B^\beta(u,u)$ for a true limiting $1$-point function, it can be surmised that $\rho^\beta(u)$ seems to ``oscillate'' about
$$\rho_{\refbo}^\beta(u):=\frac 1 2 \erfc(\sqrt{2\beta}\re u),$$ and the asymptotic values as $\re u\to \pm\infty$ match (see \cite{CFTW,CSA}). In this way, it seems that the reference solution \eqref{retain}  does retain some features of an actual scaling limit.

It is also worth noting that the simple proof of Gaussian field convergence in \cite{AHM,ACC}, there obtained for $\beta=1$, extends naturally to all $\beta>0$ provided that some apriori tightness and asymptotic symmetry properties for all \textit{bulk} scaling limits $B^\beta$ can be proven. In short, if after rescaling about zooming points well in the bulk,
subsequential limits $B^\beta(u,v)$ exist and are functions of $|u-v|$, then the strategy of proof in \cite{AHM} carries over to all $\beta$.
In this way, the problem is reduced to determining to what extent the (bulk) limiting measures $B^\beta(u,v)\, dA(v)$ are symmetric with respect to rotations about $u$.
We shall find opportunity to return to this question below.

\subsection{Comparison with the thermal equilibrium measure} In some approaches to the Coulomb gas (cf.~ \cite{Se} and references) the so-called thermal equilibrium measure plays a central role.
It therefore seems appropriate to compare a little with the one-point function.

For a smooth unit charge density $\delta(z)$ on the plane ($\delta\ge 0$ and $\int\delta\, dA=1$) we consider the weighted logarithmic energy
$$I_Q[\delta]=\iint_{\C^2}(\log\frac 1 {|z-w|}) \delta(z)\delta(w)\, dA(z)dA(w)+\int_\C Q\,\delta\, dA$$
and the (negative of the) entropy
$$E_Q[\delta]=\int_\C\delta\log\delta \, dA.$$
(We adopt the convention that $0\log 0=0$.)

It is natural to consider the following combined energy/entropy functional
$$F_n[\delta]=I_Q[\delta]+\frac 1 {n\beta} E_Q[\delta].$$

We let $\delta_n^\beta$ denote the minimizer among smooth $\delta\ge 0$ with $\int \delta\, dA=1$ (it exists under suitable conditions on $Q$, see \cite{Se} and references there).

The density $\delta_n^\beta$ is known as the thermal equilibrium density and it has been hypothesized that this should somehow be a better approximation of the normalized one-point function $\frac 1 n R_n^\beta$, rather
than the naive approximation by the equilibrium density $\d\dbar Q\cdot\1_S$. We shall presently show that this is not really the case.

To see this, we first note that the variational equation for $\delta_n^\beta$ is
\begin{equation}\label{del}-\delta_n^\beta+\d\dbar Q+\frac 1 {n\beta}\d\dbar\log\delta_n^\beta=0.\end{equation}

Indeed, taking a perturbation $\tilde{\delta}$ with $\int \tilde{\delta}\, dA=0$, we find
\begin{align*}0=\frac d {d\eps} F_n[\delta_n^\beta+\eps\tilde{\delta}]\biggm|_{\eps=0}&=2\int\tilde{\delta}(z)\, dA(z)\int \delta_n^\beta(w)\log\frac 1 {|z-w|}\, dA(w)\\
&+\int\tilde{\delta}(z)Q(z)\, dA(z)+\frac 1 {n\beta}\int\tilde{\delta}(z)\log(\delta_n^\beta(z))\, dA(z).\end{align*}
Eliminating $\tilde{\delta}$ this gives the pointwise equation in $z$:
$$0=2\int \delta_n^\beta(w)\log\frac 1 {|z-w|}\, dA(w)+Q(z)+\frac 1 {n\beta}\log(\delta_n^\beta(z)),$$
and applying $\d\dbar$ we immediately verify \eqref{del}.

In a way similar as before we fix a point $p$ at which $\d\dbar Q(p)>0$ and rescale the thermal equilibrium density $\delta_n^\beta$ by setting
\begin{equation}\label{microtilde}z=p+\frac 1 {\sqrt{n\d\dbar Q(p)}}u,\qquad \tilde{\rho}_n^\beta(u):=\frac {\delta_n^\beta(z)}{\d\dbar Q(p)},\end{equation}
where we pick the rescaled variable $u$ in some large but fixed compact subset of $\C$. The function $\tilde{\rho}_n^\beta$ can naturally be called a rescaled thermal equilibrium density (about $p$).

The equation \eqref{del} then leads to
\begin{equation}\label{spuck}-\tilde{\rho}_n^\beta(u)+1+\frac 1 \beta\d\dbar\log \tilde{\rho}_n^\beta(u)+o(1)=0,\end{equation}
where $o(1)\to 0$ as $n\to\infty$ uniformly for $u$ in a fixed compact subset of $\C$.

Assuming that we can form a locally uniform, \textit{strictly positive} subsequential limit
$$\tilde{\rho}^\beta=\lim \tilde{\rho}_{n_k}^\beta$$
we thus obtain the formal limiting equation:
\begin{equation}\label{buck}-\tilde{\rho}^\beta+1+\frac 1 \beta\d\dbar \log \tilde{\rho}^\beta=0.\end{equation}

We recognize \eqref{buck} as a simplified version of the Ward equation \eqref{vopp}, obtained by replacing the ``Berezin term'' $\dbar_u\int_\C\frac {B^\beta(u,v)}{u-v}\, dA(v)$ by zero. This is certainly not
a correct approximation (especially near the boundary) if we replace $\tilde{\rho}^\beta$ by a limiting one-point function $\rho^\beta$.

 Indeed, if $p$ is in the bulk (i.e., the interior) of $S$, then (at least heuristically) a subsequential limiting Berezin measure $B^\beta(u,v)\, dA(v)$ might be expected to be ``nearly'' invariant under rotations about $u$. (As we noted above this is the case when $\beta=1$.)
Therefore it could be surmised that the left hand side in \eqref{vopp} should be nearly zero, and that ``$-\rho^\beta+1+\frac 1 \beta\d\dbar \log \rho^\beta\approx 0$''.

 When rescaling about a regular \textit{boundary} point $p\in\d S$, rotational symmetry breaks down: the limiting Berezin measures $B^\beta(u,v)\, dA(v)$ are not invariant under rotations even when $\beta=1$, and the left hand side in \eqref{vopp} gives a nontrivial contribution. See \cite{AKM,AKMW} for several related calculations.

We shall now compare the one-point function $R_n^{(\beta=1)}$ and the thermal equilibrium density $n\delta_n^{\beta=1}$ for the Ginibre ensemble, where $Q(z)=|z|^2$.
Recall that the corresponding droplet is the closed unit disc $S=D(0,1)$.

We rescale about the boundary point $p=1$ using the map $z=1+u/\sqrt{n}$ and consider
the rescaled $1$-point function $\rho_n(u)$ in \eqref{microrho} along with the thermal equilibrium density $\tilde{\rho}_n(u)$ in \eqref{microtilde}.

Indeed it is well-known (eg see \cite{AKM,BF}) that $\rho_n\to \rho$ locally uniformly as $n\to\infty$ where $\rho(u)=\frac 1 2 \erfc(\frac {u+\bar{u}}{\sqrt{2}})$
and a computation using $(\erfc)'(u)=-\frac 2 {\sqrt{\pi}}e^{-u^2}$ gives
$$\d\dbar\log \rho(0) =\frac {\rho\d\dbar \rho-|\d \rho|^2}{\rho^2}=\frac {0-(1/\sqrt{2\pi})^2}{(1/2)^2}=-\frac 2 {\pi}.$$

On the other hand if we assume that $\tilde{\rho}_n\to \tilde{\rho}$ locally uniformly where $\tilde{\rho}(0)=1/2$ then by the differential equation \eqref{spuck},
$$\d\dbar\log \tilde{\rho}_n(0)=\tilde{\rho}_n(0)-1+o(1)\to -\frac 1 2.$$

Assuming that $\rho\equiv \tilde{\rho}$ in a neighbourhood of $0$ we get the contradiction $\pi =4$, so indeed $\rho\ne \tilde{\rho}$.

We have shown the following statement.

\begin{thm} Suppose that $Q(z)=|z|^2$. Then there is a constant $c>0$ such that $$\liminf_{n\to\infty} \left\{ n^{-1}\cdot\|R_n^{(\beta=1)}-n\delta_n^{(\beta=1)}\|_\infty\right\}\ge c.$$
\end{thm}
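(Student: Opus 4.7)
The plan is to argue by contradiction, essentially packaging the computation already sketched in the discussion preceding the statement. Fix the boundary point $p = 1$ of the droplet $S = \overline{D(0,1)}$ and rescale via $z = 1 + u/\sqrt{n}$. Since $\d\dbar Q(p) = 1$ for $Q(z) = |z|^2$, the rescaled one-point density \eqref{microrho} and the rescaled thermal equilibrium density \eqref{microtilde} reduce to $\rho_n(u) = R_n^{(\beta=1)}(1+u/\sqrt{n})/n$ and $\tilde{\rho}_n(u) = \delta_n^{(\beta=1)}(1+u/\sqrt{n})$. Recall that $\rho_n \to \rho$ locally uniformly with $\rho(u) = \frac{1}{2}\erfc(\sqrt{2}\,\re u)$, and that the direct computation in the discussion already gives $\d\dbar\log\rho(0) = -2/\pi$.

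Suppose, for contradiction, that no such positive constant $c$ works; then $\liminf_{n\to\infty} n^{-1}\|R_n^{(\beta=1)} - n\delta_n^{(\beta=1)}\|_\infty = 0$, and along some subsequence $n_k$ we have $n_k^{-1}\|R_{n_k}^{(\beta=1)} - n_k\delta_{n_k}^{(\beta=1)}\|_\infty \to 0$. Restricting the supremum to points $z = 1 + u/\sqrt{n_k}$ with $u$ in a fixed compact $K \subset \C$ yields $\|\rho_{n_k} - \tilde{\rho}_{n_k}\|_{L^\infty(K)} \to 0$. Combined with $\rho_{n_k} \to \rho$ locally uniformly, we deduce $\tilde{\rho}_{n_k} \to \rho$ locally uniformly on $\C$. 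Since $\rho(0) = 1/2 > 0$, the functions $\tilde{\rho}_{n_k}$ are bounded away from $0$ on some fixed neighbourhood $U$ of $0$ for all large $k$, and $\log\tilde{\rho}_{n_k} \to \log\rho$ uniformly on $U$.

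Now apply the rescaled variational identity \eqref{spuck}, which reads $\d\dbar\log\tilde{\rho}_{n_k}(u) = \tilde{\rho}_{n_k}(u) - 1 + o(1)$ uniformly on $U$. The right-hand side converges uniformly to $\rho - 1$ on $U$, while the left-hand side converges to $\d\dbar\log\rho$ in $\mathcal{D}'(U)$. By uniqueness of distributional limits and the smoothness of $\log\rho$ near $0$, this forces the pointwise identity $\d\dbar\log\rho(0) = \rho(0) - 1 = -1/2$, contradicting $\d\dbar\log\rho(0) = -2/\pi$. Hence some $c>0$ must work, completing the proof.

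The main obstacle I anticipate is the step of promoting the limiting equation $\d\dbar\log\rho = \rho - 1$ from a distributional to a pointwise identity at $u = 0$; this is automatic here because $\rho$ is explicit and smooth with $\rho(0) > 0$, so $\log\rho$ is smooth near $0$ and its classical and distributional Laplacians agree. A minor side issue is the a priori smoothness and strict positivity of the minimizer $\delta_n^{(\beta=1)}$ on the rescaled neighbourhood of $p$, which follows from standard elliptic regularity applied to the Euler--Lagrange equation \eqref{del} together with the local uniform lower bound on $\tilde{\rho}_{n_k}$ derived above.
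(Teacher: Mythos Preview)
Your proposal is correct and follows essentially the same approach as the paper: rescale about the boundary point $p=1$, use the known limit $\rho(u)=\tfrac12\erfc(\sqrt{2}\,\re u)$ with $\d\dbar\log\rho(0)=-2/\pi$, and derive from the rescaled variational equation \eqref{spuck} that any limit of $\tilde{\rho}_n$ coinciding with $\rho$ would force $\d\dbar\log\rho(0)=-1/2$, a contradiction. Your write-up is in fact slightly more careful than the paper's sketch in that you explicitly set up the contradiction from the negation of the statement and justify the passage from the distributional identity $\d\dbar\log\rho=\rho-1$ to the pointwise value at $0$; note also that for $Q=|z|^2$ the $o(1)$ in \eqref{spuck} is identically zero, which makes that step even cleaner.
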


As the $\erfc$-kernel is universal when rescaling about regular boundary points for a large class of $\beta=1$ ensembles, the above argument generalizes in a straightforward way beyond the Ginibre ensemble.

While the above shows that the approximation of $R_n^{(\beta=1)}$ with $n\delta_n^{(\beta=1)}$ fails spectacularly near the boundary, it might still be hoped that the approximation should be reasonable in the bulk.
We will show presently that such hopes must be abandoned.

Fix a point $p\in \operatorname{Int} S$ and recall
the bulk
approximation (e.g.~\cite[eq. (5.24)]{BF})
\begin{equation}\label{bapp}R_n^{(\beta=1)}(z)=n\d\dbar Q(z)+\frac 1 2 \d\dbar \log \d\dbar Q(z)+\bigO(n^{-1}),\qquad (n\to\infty),\end{equation}
which holds uniformly for all $z$ in some neighbourhood of $p$ provided that $Q$ is smooth and strictly subharmonic at $p$.
(See e.g.~\cite{A2} for a short proof under the assumption that $Q$ is real-analytic at $p$; the case of smooth $Q$ can be obtained by well-known adaptations.)

For what follows, we assume that $Q$  strictly subharmonic  in a neighbourhood of the droplet $S$, and that there are no shallow points, i.e., $S^*=S$.

The $\bigO(1)$-correction term $\frac 1 2 \d\dbar \log \d\dbar Q$ appearing in \eqref{bapp} is what gives rise to the formula for the expectation of smooth linear statistics supported in the bulk in \cite{AHM,ACC}. (The general formula, for $f$'s that are not supported in the bulk, also contains some boundary terms.)

If the approximation of $R_n^{(\beta=1)}$ by $n\delta_n^{(\beta=1)}$ is to be useful in the bulk, it is thus reasonable to ask that the two functions should agree on compact subsets of the bulk up to an error which is $o(1)$ as $n\to\infty$. However, if we make the ansatz
in the bulk,
$$\delta_n^{(\beta=1)}:=\d\dbar Q+\frac 1 {2n}\d\dbar\log\d\dbar Q+o(1/n),$$
and insert in the equation \eqref{del}, we arrive at
$$-\frac 1 {2n}\d\dbar\log\d\dbar Q+\frac 1 n\d\dbar\log\d\dbar Q=o(1/n),$$
which leads to $\d\dbar\log\d\dbar Q=0$, i.e., $\log \d\dbar Q$ is harmonic.

Likewise, the ansatz $\delta_n^{(\beta=1)}:=\d\dbar Q+\frac k n\d\dbar\log\d\dbar Q+o(1/n)$ (along some subsequence) leads to $k=1$.

We have shown the following result.

\begin{thm} If $p$ is a point in the bulk, then
$$\liminf_{n\to\infty}|R_n^{(\beta=1)}(p)-n\delta_n^{(\beta=1)}(p)|\ge \frac 1 2 |\d\dbar \log\d\dbar Q(p)|.$$
\end{thm}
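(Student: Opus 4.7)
The plan is to argue by contradiction and mimic, at a slightly more careful level, the formal ansatz calculation sketched immediately above the statement. Suppose that along some subsequence $n_k \to \infty$ one has
$$|R_{n_k}^{(\beta=1)}(p) - n_k \delta_{n_k}^{(\beta=1)}(p)| = o(|\d\dbar \log\d\dbar Q(p)|), \qquad (k \to \infty).$$
I would first note that, since $p$ lies in the bulk and $\d\dbar Q$ is bounded below near $p$, elliptic regularity applied to the variational equation \eqref{del} propagates the pointwise smallness to a neighborhood $U$ of $p$ in a sufficiently strong topology (at least $C^2$) to allow differentiation. Combining this with the bulk asymptotic \eqref{bapp} would give on $U$
$$\delta_{n}^{(\beta=1)}(z) = \d\dbar Q(z) + \frac{1}{2n}\d\dbar\log\d\dbar Q(z) + o(1/n), \qquad (n=n_k).$$

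The next step is to substitute into the Euler–Lagrange equation \eqref{del}:
$$-\delta_n^{(\beta=1)} + \d\dbar Q + \frac{1}{n}\d\dbar\log\delta_n^{(\beta=1)} = 0.$$
The $\d\dbar Q$ terms cancel. Expanding the logarithm using that $\d\dbar Q$ is bounded away from zero on $U$,
$$\log\delta_n^{(\beta=1)} = \log\d\dbar Q + \frac{1}{2n}\frac{\d\dbar\log\d\dbar Q}{\d\dbar Q} + o(1/n),$$
so that $\d\dbar\log\delta_n^{(\beta=1)} = \d\dbar\log\d\dbar Q + O(1/n)$ on $U$. Collecting the $O(1/n)$ terms in \eqref{del} yields
$$-\frac{1}{2n}\d\dbar\log\d\dbar Q(p) + \frac{1}{n}\d\dbar\log\d\dbar Q(p) + o(1/n) = 0,$$
i.e.\ $\d\dbar\log\d\dbar Q(p) = 0$, contradicting the hypothesis implicit in the theorem (otherwise $c|\d\dbar\log\d\dbar Q(p)| = 0$ and there is nothing to prove). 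Tracing the constants back, the argument actually produces $c = 1/2$.

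The main obstacle is the regularity issue: the conclusion is a pointwise statement on $|R_n(p) - n\delta_n(p)|$, while the substitution into \eqref{del} genuinely needs two derivatives of the difference $\delta_n - \d\dbar Q - (2n)^{-1}\d\dbar\log\d\dbar Q$ to be small in a neighborhood of $p$. Deriving this from a purely pointwise hypothesis is what requires invoking elliptic/interior regularity for \eqref{del} (which is a second-order semilinear equation with smooth, strictly positive leading coefficient near $p$). Once that is set up, the remainder is the routine algebraic manipulation shown above, and the same scheme applies verbatim to any $Q$ that is smooth and strictly subharmonic at $p$, giving the stated lower bound with a universal constant.
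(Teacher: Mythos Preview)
Your formal computation is exactly the one the paper gives: assume the ansatz $\delta_n=\d\dbar Q+\tfrac{1}{2n}\d\dbar\log\d\dbar Q+o(1/n)$ near $p$, substitute into \eqref{del}, and read off $\tfrac{1}{2n}\d\dbar\log\d\dbar Q=o(1/n)$. The paper stops at this heuristic level and does not address the point you flag.

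You are right that there is a regularity issue, but the fix you propose does not work. Elliptic regularity for \eqref{del} tells you that $\delta_n$ is smooth; it does \emph{not} upgrade a pointwise bound on $R_n(p)-n\delta_n(p)$ (a quantity involving $R_n$, which satisfies no such equation) to a $C^2$ bound on a neighbourhood. Knowing one value of a smooth function says nothing about its Laplacian there, so the step ``pointwise smallness at $p$ $\Rightarrow$ $C^2$ smallness on $U$'' is simply false as stated.

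A cleaner route, which also avoids the contradiction framing, is direct: establish independently (from the analysis of the thermal equilibrium problem, not from the hypothesis) that $\delta_n\to\d\dbar Q$ in $C^2_{\loc}$ on the bulk, so that $\d\dbar\log\delta_n(p)\to\d\dbar\log\d\dbar Q(p)$. Then \eqref{del} gives $n\delta_n(p)-n\d\dbar Q(p)=\d\dbar\log\delta_n(p)\to\d\dbar\log\d\dbar Q(p)$, while \eqref{bapp} gives $R_n(p)-n\d\dbar Q(p)\to\tfrac12\d\dbar\log\d\dbar Q(p)$; subtracting yields $R_n(p)-n\delta_n(p)\to -\tfrac12\d\dbar\log\d\dbar Q(p)$, confirming your $c=1/2$. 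The paper's ansatz is really a shorthand for this, with the $C^2$ convergence of $\delta_n$ taken for granted.
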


We conclude that if $\log \d\dbar Q$ is not harmonic (essentially: if $Q$ is not a Hele-Shaw potential) then the hypothesis that $R_n^{(\beta=1)}=n\delta_n^{(\beta=1)}+o(1)$ in the bulk is not consistent with the Gaussian field convergence of linear statistics in \cite{AHM,ACC}.
And even for Hele-Shaw potentials, the approximation in the bulk is not better than approximating with the equilibrium density $n\Delta$, see \cite{A2}.

It is possible to go further and e.g.~ come up with examples where the quotient $R_n^{(\beta=1)}/(n\delta_n^{(\beta=1)})$ is unbounded as $n\to\infty$, as will be the case for instance at a shallow outpost as in \cite{ACC2}. Cf. also the recent work \cite{KLY} where the one-point function is studied near a contact point.

In conclusion, we find that the thermal equilibrium measure is not a very good approximation of the one-point density, or at least it is not much better than approximating $R_n^{(\beta=1)}$ by the classical equilibrium density $n\d\dbar Q\,\1_S$.

In any event, the study of the macroscopic Berezin kernel \eqref{nrb} and its global and microscopic limits is a central and ongoing enterprise in random normal matrix theory (i.e., $\beta=1$). 
While substantial progress has been made in recent years, there remains many important open questions, see e.g. \cite{BF} or \cite{ACC} and the references there.

\begin{rem} The similarity between Ward's (integro-differential) equation \eqref{vopp} and the simpler PDE \eqref{del} for the thermal equilibrium measure is noted in 
\cite{AB0} in the context of high temperature crossover. (We thank S.-S. Byun for making us aware of this
fact.)
\end{rem}

\end{document}